\theoremstyle{definition}
\newtheorem{theorem}{Theorem}[section]
\newtheorem{corollary}[theorem]{Corollary}
\newtheorem{proposition}[theorem]{Proposition} 
\newtheorem{problem}[theorem]{Problem}
\begin{document}

\title{{\bf Edge-Locating Coloring of Graphs}}

\author{M. Korivand$^{1}$, D.A. Mojdeh$^{2}$, Edy Tri  Baskoro$^{3,}$\thanks{Corresponding author} , and A. Erfanian$^{1}$ }

\date{}

\maketitle

\begin{center}

$^{1}$ Department of Pure Mathematics, Faculty of Mathematical Sciences and Center of Excellence in Analysis on
Algebraic Structures \\ Ferdowsi University of Mashhad, P.O. Box 1159-91775, Mashhad, Iran \\
e-mail: {\tt mekorivand@gmail.com }, {\tt erfanian@um.ac.ir}\\

$^{2}$ Department of Mathematics, Faculty of Mathematical Sciences \\
University of Mazandaran, Babolsar, Iran \\
 e-mail: {\tt damojdeh@umz.ac.ir}\\

$^{3}$ Combinatorial Mathematics Research Group, Institut Teknologi Bandung, and \\
Center for Research Collaboration on Graph Theory and Combinatorics, Indonesia. \\
e-mail: {\tt ebaskoro@itb.ac.id}

\end{center}

\begin{abstract}
An edge-locating coloring of a simple connected graph $G$ is a partition of its edge set into matchings such that the vertices of $G$ are distinguished by the distance to the matchings. The minimum number of the matchings of $G$ that admits an edge-locating coloring is the edge-locating chromatic number of $G$, and denoted by  $\chi'_L(G)$.
In this paper we initiate to introduce the  concept of edge-locating coloring and determine the exact values $\chi'_L(G)$ of some custom graphs. The graphs $G$ with  $\chi'_L(G)\in \{2,m\}$ are characterized, where $m$ is the size of $G$. We investigate the relationship between order, diameter, and  edge-locating chromatic number of $G$. For a complete graph $K_n$, we obtain the exact values of $\chi'_L(K_n)$ and $\chi'_L(K_n-M)$, where $M$ is a maximum matching;  indeed this result is also extended for any graph. We will determine the edge-locating chromatic number of join graph $G+H$, where $G$ and $H$ are some well-known graphs. In particular,
for any graph $G$, we show a relationship between $\chi'_L(G+K_1)$ and $\Delta(G)$.  We investigate the edge-locating chromatic number of trees and present a characterization bound for any tree in terms of maximum degree, number of leaves, and the support vertices of trees. Finally, we prove that any edge-locating coloring of a graph is an edge distinguishing coloring.

\end{abstract}

\noindent {\bf Key words:} edge-locating coloring, matching, join graphs, distinguishing chromatic index.

\medskip\noindent
{\bf AMS Subj.\ Class:} 05C15.

\section{Introduction}
One of the structural and applied topics in graph theory is distinguishing graph vertices and edges by means of different tools.
This approach has a relatively old history in graph theory and has used various tools such as distance and automorphism in graphs. In the following, we describe the history of some known concepts that follow such an approach. 

In 1977, Babaei proposed a concept that today inspires many methods for distinguishing elements of graphs by automorphism \cite{babai}. 
After Albertson and Collins \cite{alber} studied this concept in detail and proposed its application, it was widely considered in the name of {\it asymmetric coloring} (or {\it distinguishing labelling}). 
Among the parameters defined along this concept, we can mention {\it distinguishing coloring} (or {\it proper distinguishing coloring}),
{\it distinguishing index}, {\it distinguishing arc-coloring} and {\it distinguishing threshold} \cite{Collins, Kalinowski, Kalinowski2, Shekarriz}.

The other index related to automorphism is {\it determining set}, in which the goal is to identify the automorphism by a subset of graph vertices.
This concept were introduced independently by Boutin \cite{boutin} and Erwin \& Harary \cite{erwin}. 
The determining numbers of Kneser graphs and Cartesian product of graphs are provided in \cite{boutin, caceres, boutin2}.

One of the most important and well-known concepts that distinguishes the vertices of a graph with respect to distance is the {\it metric dimension}.
In 1975-76,  Slater \cite{slater} and Harary \& Melter \cite{harary}  independently introduced and studied this concept for connected graphs.
This introduction was a turning point for a branch of research that occupied many researchers, so that after about 50 years this concept is still the foundation of many research projects and applications, even in other sciences such as chemistry and computer science.
Due to its many applications in different sciences and other versions of the metric dimension, it has been introduced. In recent years, this concept has received more attention than in the past. We recommend the reader who needs more information about this concept refer to two recently raised surveys that discuss in detail the different versions of the metric dimension and its applications \cite{survey1, survey2}.

The {\it edge metric dimension} is one of these concepts derived from the metric dimension, where the goal is to distinguish the edges from a set of graph vertices \cite{Kelenc}.
Of course, in the metric dimension literature, we know the two concepts as edge metric dimension. The second case, which is also discussed in this article, means the least number of edges that resolve the vertices of a graph with respect to the distance \cite{nasir}.

In 2002, Chartrand et. al. introduced a coloring that we know as {\it locating coloring} \cite{chart}. In this coloring, the goal is to distinguish the vertices of a graph by their distance from a partition of the vertex set.
The locating coloring has been the subject of many researchers; for more details, see \cite{beh, chart03, Irawan, Mojdeh}.

In this paper, our goal is to distinguish the vertices of a connected graph by the distance of the matchings that partition the edge set. In fact, we can see this definition as the edge version of the locating coloring. We give its exact definition below.

Let $G$ be a simple connected graph. Let $c: E(G) \longrightarrow \mathbb{N}$
be a proper edge coloring of $G$, in which adjacent edges of $G$ have different colors. Let
$\pi = (\mathcal{C}_1, \mathcal{C}_2, \ldots, \mathcal{C}_k )$ denote the ordered partition of $E(G)$, that is the color classes admitted of $c$.
For a vertex $v$ of $G$, the {\em edge color code} $c_{\pi}(v)$ is the ordered $k$-tuple
$
({d}(v, \mathcal{C}_1), {d}(v, \mathcal{C}_2), \ldots, {d}(v, \mathcal{C}_k)),
$
where
${d}(v, \mathcal{C}_i)= \min \{{d}(v, e) | e \in \mathcal{C}_i \}$
for
$1\leq i \leq k$,
and
${d}(v, e)= \min \{{d}(v, x), {d}(v, y)  | e=xy\}$.

The coloring $c$ is called an {\it edge-locating coloring} of
$G$ if distinct vertices of $G$ have different edge color codes. The {\it edge-locating chromatic number} $\chi'_L(G)$ is the minimum number of colors needed for an edge-locating coloring of $G$.

In this paper, we generally seek to investigate the behavior of the edge-locating coloring in some family graphs.
Specifically, in Section 2, we compute the edge-locating coloring for paths, cycles, and complete bipartite graphs.
Also, we characterize all graphs $G$ of size $m$ with the property that
$\chi'_L(G) = k$, where $k\in \{2, m\}$.
Moreover, we present some bounds for the edge-locating chromatic number.
In Section 3, we  derive the edge-locating coloring of complete graphs and the complete graphs minus some matchings. Moreover,  in this section, we derive a sharp upper bound for the edge-locating chromatic number of a graph having a perfect matching and we extend it for a maximum matching.
In Section 4, we will determine the edge-locating chromatic number of join graph $G+H$, where $G$ and $H$ are some well known graphs.
In Section 5, we will examine the edge-locating chromatic number of trees. In particular, we compute the edge-locating chromatic number of the double star graphs and generalize it. Moreover, we present a characterization bound for any tree in terms of maximum degree, number of leaves and number of support vertices of trees.

We saw that there are several automorphism bases and distance bases coloring and index in graph theory. In general, these two concepts travel their research paths without paying attention to each other. However, some relationships between some of these parameters have been proven.
It has been shown that any resolving set of a graph is a determining set.
Determining sets and resolving sets were jointly studied in \cite{caceres2, garijo, pan}.
Also, Korivand, Erfanian, and Baskoro recently showed that any locating coloring is a distinguishing coloring \cite{korivand}.
In Section \ref{EdgeMetric-dist chro}, we prove that any edge-locating coloring of a graph is an edge distinguishing coloring. Also, we bound the edge-locating chromatic number to edge metric dimension and chromatic index.


\section{General results}

The edge-locating chromatic number is defined for graphs with more than two vertices. Since graphs are simple if all edges assign distinct colors then clearly the edge color codes of vertices are different. For any simple connected graph
$G$
with size
$m>2$,

$$
2 \leq \chi'_L(G) \leq m.
$$

Another natural bound for edge-locating chromatic number is
$\chi'(G)\leq \chi'_L(G)$.
Since
$\chi'(P_n) = 2$,
$\chi'_L(P_n) \geq 2$,
for
$n\geq 3$.
Clearly
$\chi'_L(P_3) =2$.
Assume that
$n > 3$.
If we consider an edge $2$-coloring of
$P_n$
then any two vertices of
$P_n$
that are not pendant vertices have the same edge color code. Thus
$\chi'_L(P_n) \geq 3$.
Now, for an edge-locating $3$-coloring of
$P_n$,
$n\geq 4$,
it is enough to assign color
$3$
to an edge with a pendant end vertex, and other edges of
$P_n$
coloring by color
$1$
and
$2$,
alternately. Therefore, 
$\chi'_L(P_n) = 3$.
Now, we can present the next proposition.

\begin{proposition}\label{prop 1}
For positive integer
$n$,
$\chi'_L(P_n)=\begin{cases}
2, & \text{if}\; n=3\\
3, & \text{if}\; n\geq 4.
\end{cases}$
\end{proposition}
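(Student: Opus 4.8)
The plan is to verify the two cases separately, following essentially the discussion that precedes the statement in the excerpt but making the arguments precise. For $n=3$, the path $P_3$ has exactly two edges which are adjacent, so a proper edge coloring must use at least two colors; conversely, coloring the two edges with colors $1$ and $2$ is proper, and the three vertices receive the color codes $(0,1)$, $(0,0)$, and $(1,0)$, which are pairwise distinct. Hence $\chi'_L(P_3)=2$.

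For $n\ge 4$, I would first establish the lower bound $\chi'_L(P_n)\ge 3$. Since $\chi'(P_n)=2$, the only candidate below $3$ is $2$, so suppose for contradiction that $c$ is an edge-locating $2$-coloring of $P_n=v_1v_2\cdots v_n$. A proper $2$-edge-coloring of a path is forced (up to swapping the two colors): consecutive edges alternate, so every color class is nonempty and in fact every internal vertex is incident with one edge of each color. Therefore $d(v_i,\mathcal{C}_1)=d(v_i,\mathcal{C}_2)=0$ for all $2\le i\le n-1$, and since $n\ge 4$ there are at least two such internal vertices, all sharing the code $(0,0)$ — contradiction. This gives $\chi'_L(P_n)\ge 3$.

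For the upper bound when $n\ge 4$, I would exhibit the explicit coloring suggested in the excerpt: color the pendant edge $v_1v_2$ with color $3$, and color the remaining edges $v_2v_3, v_3v_4,\ldots,v_{n-1}v_n$ alternately with colors $1$ and $2$ (say $v_2v_3$ gets color $1$). One checks this is a proper edge coloring (the only adjacency to worry about is at $v_2$, where the edges have colors $3$ and $1$). Then I would compute the code $c_\pi(v_i)=(d(v_i,\mathcal{C}_1),d(v_i,\mathcal{C}_2),d(v_i,\mathcal{C}_3))$ for each vertex: the third coordinate $d(v_i,\mathcal{C}_3)$ is just $d(v_i,\{v_1v_2\})=\max(i-2,0)$, which is strictly increasing in $i$ for $i\ge 2$. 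So the only possible collision is between $v_1$ and $v_2$ (both at distance $0$ from the color-$3$ edge, hence third coordinate $0$); but $v_1$ is incident only with the color-$3$ edge, giving it first and second coordinates $1$ and $2$ (for $n\ge 4$ there is an edge of each of colors $1,2$), whereas $v_2$ is incident with a color-$1$ edge, giving first coordinate $0$. Thus all codes are distinct and $\chi'_L(P_n)\le 3$, completing the proof.

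The routine part is the code bookkeeping; the one point deserving a little care is the $n=4$ boundary case, where the alternating block $v_2v_3,v_3v_4$ uses colors $1$ and $2$ exactly once each, so one must confirm that $\mathcal{C}_2$ is indeed nonempty and that $v_1$'s code $(1,2,0)$ differs from $v_2$'s code $(0,1,0)$ — which it does. No genuine obstacle is expected here.
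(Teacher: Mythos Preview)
Your proof is correct and follows essentially the same approach as the paper: the lower bound for $n\ge 4$ comes from observing that under any proper $2$-edge-coloring all internal vertices have code $(0,0)$, and the upper bound uses the same explicit coloring (one pendant edge colored $3$, the rest alternating $1,2$). You have simply supplied the code-verification details that the paper omits.
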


The distance between two edges
$e_1$
and
$e_2$
is defined by
$\min \{{d}(a_i, b_j)\hspace{2mm} | \hspace{2mm} 1\leq i, j \leq 2, \hspace{2mm} e_1 = a_1 a_2, \hspace{2mm} e_2 = b_1 b_2 \}$.
\begin{theorem}\label{2}
For any integer
$n\ge 3$,
$\chi'_L(C_n)=\begin{cases}
3, & \text{if}\; n=3\\
4, & \text{if}\; n\geq 4.
\end{cases}$.
\end{theorem}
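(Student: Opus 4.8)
The plan is to dispose of $n=3$ directly and then prove $\chi'_L(C_n)=4$ for $n\ge 4$ by matching bounds. For $n=3$ one has $\chi'_L(C_3)\ge\chi'(C_3)=3$, while colouring the three edges with three distinct colours is edge-locating (distinct edges get distinct colours, so the codes differ), hence $\chi'_L(C_3)=3$. Now fix $n\ge 4$ and first prove $\chi'_L(C_n)\ge 4$. Suppose to the contrary that $c$ is an edge-locating colouring of $C_n$ with at most three colours, with classes $\mathcal C_1,\mathcal C_2,\mathcal C_3$. If some colour is unused, every vertex has the same pair of colours on its two incident edges, so all vertices share a code — impossible; hence each $a_i:=|\mathcal C_i|\ge 1$ and $a_1+a_2+a_3=n$. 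I would then isolate the following ``gap'' structure: for a fixed colour $i$, the edges of $\mathcal C_i$ form a matching, so deleting them cuts $C_n$ into exactly $a_i$ paths, the \emph{$i$-gaps}, each with at least one edge; a vertex lies on no $\mathcal C_i$-edge iff it is an internal vertex of an $i$-gap, and for such a $v$ the code $c_\pi(v)$ has $i$th entry $d(v,\mathcal C_i)\ge 1$ and its other two entries $0$ (the gap being properly $2$-coloured by the remaining colours). On any $i$-gap $w_0w_1\cdots w_\ell$ with $\ell\ge 2$, the end $w_0$ lies on a $\mathcal C_i$-edge, so $1\le d(w_1,\mathcal C_i)\le d(w_1,w_0)=1$, hence $d(w_1,\mathcal C_i)=1$, and symmetrically $d(w_{\ell-1},\mathcal C_i)=1$. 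Two internal vertices of $i$-gaps with $i$th entry $1$ have equal codes, so there is at most one $i$-gap of length $\ge 2$, and — a gap of length $\ge 3$ supplying two distinct such vertices $w_1\neq w_{\ell-1}$ — that exceptional gap has length exactly $2$.

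Counting the $n$ edges as $a_i$ colour-$i$ edges alternating with $a_i$ gaps of total length $a_i+\epsilon_i$, where $\epsilon_i\in\{0,1\}$ records whether a length-$2$ gap occurs, gives $n=2a_i+\epsilon_i$ for every $i$, so $\epsilon_i\equiv n\pmod 2$ in all cases. If $n$ is even then $\epsilon_i=0$ and $a_i=n/2$ for all $i$, whence $n=a_1+a_2+a_3=3n/2$; if $n$ is odd then $\epsilon_i=1$ and $a_i=(n-1)/2$, whence $n=3(n-1)/2$, i.e.\ $n=3$. Both contradict $n\ge 4$, so no such $c$ exists and $\chi'_L(C_n)\ge 4$.

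For the upper bound, label the edges $e_0,\dots,e_{n-1}$ consecutively with $e_j=v_jv_{j+1}$, and set $c(e_0)=3$, $c(e_1)=4$, and $c(e_2),\dots,c(e_{n-1})$ alternately $1,2$; this is a proper edge $4$-colouring. Since extra coordinates can only refine a colour code, it suffices to check that $v\mapsto\bigl(d(v,e_0),d(v,e_1)\bigr)$ is injective on $V(C_n)$. Let $\sigma$ be the reflection of $C_n$ through the midpoint of $e_0$, so $\sigma(v_k)=v_{1-k}$; then $v\mapsto d(v,e_0)$ is $\sigma$-invariant and each of its level sets has size at most $2$, so $d(u,e_0)=d(v,e_0)$ forces $v\in\{u,\sigma(u)\}$. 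Finally $d(\sigma(u),e_1)=d(u,\sigma(e_1))=d(u,e_{n-1})$, and $u$ is equidistant from $e_1$ and $e_{n-1}$ only if $u$ is the $\sigma$-fixed vertex (which exists only for $n$ odd, and then carries the strictly maximal, hence unique, value of $d(\cdot,e_0)$), because the bisector of $e_1$ and $e_{n-1}$ meets $V(C_n)$ only there. Hence this colouring is edge-locating and $\chi'_L(C_n)\le 4$, so $\chi'_L(C_n)=4$ for $n\ge 4$.

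The hard part is the lower bound, and within it the gap lemma: seeing that ``missing colour $i$ at $v$'' coincides with ``$v$ internal to an $i$-gap'', that the near-end internal vertices of any gap sit at distance exactly $1$ from $\mathcal C_i$, and then extracting the identity $n=2a_i+\epsilon_i$ for \emph{every} colour at once, which is what powers the parity/summation contradiction. The upper-bound construction and its verification are comparatively routine, the point being that the two singleton classes $\{e_0\}$ and $\{e_1\}$ already carry essentially all of the distinguishing power.
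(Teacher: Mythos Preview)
Your proof is correct. The upper-bound construction (two adjacent edges coloured $3,4$, the rest alternating $1,2$) is exactly the paper's, though you supply a more careful verification via the pair $(d(\cdot,e_0),d(\cdot,e_1))$ and the reflection $\sigma$; the ``bisector'' remark is terse but is made precise by identifying $d(v_k,e_j)$ with the circular distance from $k$ to $j+\tfrac12$ minus $\tfrac12$, so equidistance from $e_1$ and $e_{n-1}$ forces $k\in\{\tfrac12,\tfrac12+\tfrac n2\}$, which hits an integer only when $n$ is odd.

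Where you genuinely diverge from the paper is in the lower bound. The paper argues by cases on the least-used colour class: it first rules out that colour~$3$ occurs only once, then takes two nearest colour-$3$ edges and splits on whether their distance is $\ge 2$ or $=1$, in the latter case looking at a maximal alternating run and comparing the two vertices just outside it. Your argument is structurally cleaner: for each colour $i$ you observe that all but at most one $i$-gap has length exactly $1$ and the exceptional gap has length exactly $2$, which yields the uniform identity $n=2a_i+\varepsilon_i$ with $\varepsilon_i\in\{0,1\}$ for \emph{every} $i$, and then the contradiction drops out of summing $a_1+a_2+a_3=n$. This buys you a parity-free, symmetric argument with no case analysis on ``least-used colour'' or ``closest pair'', and it makes transparent why $n=3$ is the unique value compatible with three colours. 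The paper's approach, by contrast, is more hands-on and localised to a single colour class.
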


\begin{proof}
 For $n=3$, $\chi'(C_n)=\chi'_L(C_n)=3$. Now, we claim that
$\chi'_L(C_n) > 3$, 
for 
$n\geq 4$.
For a contradiction, assume that the edges of
$C_n$
are colored by three colors. Without loss of generality, we may suppose that the color
$3$
is the least used color in
$C_n$.
Let
$e=v_1v_2$
denote the only edge colored by
$3$.
Since
$n\geq 4$, for $n$ odd, the vertices $v_3, v_n$ have the same edge color code. For $n$ even, the vertices $v_1, v_2$ have the same edge color code. Hence, color
$3$
is assigned to at least two edges. Assume that
$e$
and
$f$
are two edges with color
$3$,
such that
${d}(e, f) = \min \{{d}(e_1, e_2) | e_1, e_2 \in \mathcal{C}_3 \}$.
If the distance between
$e$
and
$f$
are at least two, then
$c_{\pi}(a) = c_{\pi}(b)$,
where
$a\sim v\sim u\sim b$
and
$e=vu$.
Let
$\{ e_1, e_2, \ldots, e_m \}$
be a maximal alternative matching such that
${d}(e_i, e_{i+1}) = 1$
and
$e_i \in \mathcal{C}_3$,
for
$1\leq i \leq m$.
Since the color
$3$
is the least color used in
$C_n$,
the vertices
$a$
and
$b$
with the property that
${d}(a, e_1) = {d}(b, e_m) = 1$
and
$a$,
$b$
are not end points of
$e_i$,
$1\leq i \leq m$,
have the same edge color code
$(0, 0, 1)$.
Therefore, in all cases we have two vertices with the same edge color code, a contradiction.

Finally, we present an edge-locating $4$-coloring of
$C_n$
in such a way that assigns to two  incident edges colors
$3$
and
$4$,
and other edges coloring by
$1$
and
$2$, alternately.
\end{proof}
\begin{proposition}
Let $G$ be a graph. Then $\chi'_L(G)=2$ if and only if $G \cong P_3$.
\end{proposition}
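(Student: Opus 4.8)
The plan is to treat the two implications separately, drawing on Proposition~\ref{prop 1} and Theorem~\ref{2}. The backward direction is immediate: by Proposition~\ref{prop 1} we have $\chi'_L(P_3)=2$, so any graph isomorphic to $P_3$ realizes an edge-locating $2$-coloring.

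For the forward direction, suppose $\chi'_L(G)=2$ and fix an edge-locating $2$-coloring $c$ of $G$. By definition $c$ is a proper edge coloring, so $\chi'(G)\le 2$ (this is also the bound $\chi'(G)\le\chi'_L(G)$ noted above). Since $\Delta(G)\le\chi'(G)$, we get $\Delta(G)\le 2$, so $G$, being connected with more than two vertices, is either a path $P_n$ with $n\ge 3$ or a cycle $C_n$ with $n\ge 3$. It therefore remains to eliminate every such graph except $P_3$.

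If $G\cong C_n$, then $\chi'(C_n)\le 2$ forces $n$ even, in particular $n\ge 4$; but Theorem~\ref{2} gives $\chi'_L(C_n)=4\ne 2$, a contradiction. (Odd cycles are ruled out already here, since $\chi'(C_{2k+1})=3>2$.) Hence $G\cong P_n$ with $n\ge 3$, and Proposition~\ref{prop 1} states $\chi'_L(P_n)=2$ only for $n=3$. Therefore $G\cong P_3$.

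The argument is short and presents essentially no obstacle; the only points needing a little care are invoking that an edge-locating coloring is proper by definition — so that the chain $\Delta(G)\le\chi'(G)\le\chi'_L(G)$ applies — and the observation that the even-cycle case, not the odd-cycle case, is the one actually disposed of by Theorem~\ref{2}.
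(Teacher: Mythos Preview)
Your proof is correct and follows essentially the same route as the paper: from $\chi'_L(G)=2$ deduce $\Delta(G)\le 2$, classify $G$ as a path or cycle, and then invoke Proposition~\ref{prop 1} and Theorem~\ref{2} to rule out everything except $P_3$. The paper is terser (it simply asserts $\Delta(G)=2$ and cites the two results), while you spell out the chain $\Delta(G)\le\chi'(G)\le\chi'_L(G)$ and separate the odd-cycle and even-cycle cases explicitly, but the substance is identical.
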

\begin{proof}
Only one implication requires proof. Assume that $\chi'_L(G)=2$.
Hence $\Delta(G) =2$. This implies that $G$ is a cycle or a path. On the other hand, by Proposition \ref{prop 1} and Theorem \ref{2}, we know that all cycles and paths except
$P_3$ need at least three colors for an edge-locating coloring. So the result is immediate.
\end{proof}

\begin{theorem}\label{3}
For distinct integers $n, m\geq 2$, $\chi'_L(K_{n, m}) = \max \{n ,m\}+1$

and $\chi'_L(K_{n, n}) = n+2$.
\end{theorem}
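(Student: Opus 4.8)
The plan is to reduce everything to a counting statement about sets of colors, using that $K_{n,m}$ has diameter $2$ and that every edge has exactly one endpoint in each part. Write the parts as $A$ and $B$. If $v\in A$ and $e=xy$ with $y\in B$, then $d(v,y)=1$, so $d(v,e)\in\{0,1\}$, with $d(v,e)=0$ iff $v\in e$; symmetrically for $v\in B$. Hence for each color class $\mathcal{C}_i$ we have $d(v,\mathcal{C}_i)\in\{0,1\}$, equal to $0$ exactly when color $i$ occurs on some edge at $v$. So the edge color code of $v$ merely records which colors appear at $v$, and a proper edge coloring of $K_{n,m}$ is edge-locating if and only if distinct vertices see distinct sets of colors on their incident edges. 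Since a vertex of the part of size $s$ has degree equal to the size of the other part, and $\chi'(K_{n,m})=\Delta(K_{n,m})=\max\{n,m\}$ by K\"onig's theorem, this turns the whole question into combinatorics of color sets.

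For the lower bounds I would argue as follows. Assume $n>m\ge 2$; then each vertex of the smaller part has degree $n$, and in any proper $n$-coloring all such vertices use every color and thus share the code $(0,\dots,0)$. As the smaller part has $m\ge 2$ vertices, $\chi'_L(K_{n,m})\ge n+1$. For $K_{n,n}$ the same remark rules out $n$ colors, and I would rule out $n+1$ colors by counting: in an edge-locating $(n+1)$-coloring the $n$ vertices of $A$, each missing exactly one color, must miss $n$ distinct colors, so exactly one color $\alpha$ is used at every vertex of $A$; likewise one color $\beta$ at every vertex of $B$. The colors missed by some vertex of $A$ (all but $\alpha$) and those missed by some vertex of $B$ (all but $\beta$) are two $n$-subsets of an $(n+1)$-set, hence they intersect since $n\ge 2$; a common missed color yields a vertex of $A$ and a vertex of $B$ with identical codes, a contradiction. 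So $\chi'_L(K_{n,n})\ge n+2$.

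For the matching upper bounds I would exhibit explicit cyclic colorings. For $K_{n,m}$ with $n>m$, index $A=\{a_0,\dots,a_{n-1}\}$, $B=\{b_0,\dots,b_{m-1}\}$ and set $c(a_ib_j)=(i+j)\bmod(n+1)$: properness on each side is immediate, $b_j$ misses exactly color $(j-1)\bmod(n+1)$ (so the $B$-vertices are distinguished), $a_i$ uses the cyclic interval of length $m<n+1$ starting at $i$ (so the $A$-vertices are distinguished, since such an interval determines its starting point), and there is no collision between the two parts because $A$-codes have $m$ zeros while $B$-codes have $n\ne m$ zeros. For $K_{n,n}$ the naive coloring $c(a_ib_j)=(i+j)\bmod(n+2)$ fails because $a_i$ and $b_i$ both miss $\{i-2,i-1\}$, so I would twist it on one column, keeping $c(a_ib_j)=(i+j)\bmod(n+2)$ for $j\le n-2$ but setting $c(a_ib_{n-1})=(i-2)\bmod(n+2)$. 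After this twist each $a_i$ misses a pair of colors at cyclic distance $2$ while each $b_j$ misses a pair at cyclic distance $1$, which already forbids any collision between the parts; within each part the missed pairs are pairwise distinct, and the twist is easily checked to keep the coloring proper. I expect this last verification --- properness at the modified vertices and distinctness of all $2n$ missed pairs for $K_{n,n}$ --- to be the only delicate part; everything else follows routinely from the ``code $=$ set of incident colors'' reduction. The smallest instance $K_{2,2}=C_4$ gives $n+2=4$, consistent with Theorem~\ref{2}.
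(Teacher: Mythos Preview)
Your proposal is correct and follows essentially the same route as the paper: both reduce the question to distinguishing vertices by their \emph{sets} of incident colors (the paper encodes this via the matrix conditions (i)--(iv)), obtain the lower bounds by a counting argument on those sets, and realize the upper bounds by cyclic/circulant colorings with a single ``twist'' (you perturb one column, the paper perturbs one row) in the balanced case $K_{n,n}$. Your pigeonhole argument for $\chi'_L(K_{n,n})\ge n+2$ is in fact tidier than the paper's version, and your observation that after the twist the $A$-vertices miss pairs at cyclic distance $2$ while the $B$-vertices miss pairs at cyclic distance $1$ gives a clean reason for the cross-part separation that the paper leaves to the reader.
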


\begin{proof}
For the comfort of calculation, we consider matrix
  $n \times m$, $A= [a_{b_i c_j}]$,
where
$\{b_1, b_2, \ldots, b_n \}$ and
$\{c_1, c_2, \ldots, c_m \}$ are the partite sets of $K_{n, m}$,
and $a_{b_i c_j}$ is the color of edge $b_i c_j$.
Thus, for any fixed integer
$i$ ($1\leq i \leq n$),
row
$(a_{b_i c_j})_{j=1}^{m}$ is the assigned colors of the incidence edges of
$b_i$.
Similarly, for any fixed integer
$j$ ($1\leq j \leq m$),
column
$(a_{b_i c_j})_{i=1}^{n}$
is the assigned colors of the incidence edges of
$c_j$.
An edge-locating coloring of
$K_{n, m}$
gives the following conditions on
$A$.
\begin{itemize}
\item[(i)] All elements in each row (column) are distinct.
\item[(ii)] For $i$ and $j$
($1\leq i, j \leq n$), $\{ a_{b_i c_k} \}_{k=1}^{m} \neq \{ a_{b_j c_k} \}_{k=1}^{m}$.
\item[(iii)] For $i$ and $j$ ($1\leq i, j \leq m$), $\{ a_{b_k c_i} \}_{k=1}^{n} \neq \{ a_{b_k c_j} \}_{k=1}^{n}$.
\end{itemize}
Let $n > m$.
To satisfy conditions (i) and (ii) we need more than $n$
colors. We claim that with $n+1$ colors matrix $A$
with conditions (i), (ii) and (iii) is constructed. For this, let
$S$ be $(n+1) \times (n+1)$ matrix consisting of all column matrices of colors
$ [1, 2, \ldots, n, n+1]^{t}$,
$[n+1, 1, \ldots, n-1, n]^{t},\ldots, [2, 3, \ldots, n+1, 1]^{t}$.
Now, assume that $A$ is the sub-matrix of $S$ consisting of first
$n$ rows and $m$ columns, where   $(a_{b_i c_1})_{i=1}^{n} = [1, 2, \ldots, n]^{t}$,
$(a_{b_i c_2})_{i=1}^{n} = [n+1, 1, \ldots, n-1]^{t},\ldots, (a_{b_i c_m})_{i=1}^{n} = [n+3-m, n+4-m, \ldots, n+m+1-m, 1,2, \ldots, n+1-m ]^{t}$.
Then, we can see that all conditions (i), (ii), and (iii) satisfy on
$A$, and the result is available.

For the other implication, let $m=n$. In this case, to construct matrix
$A$, we need condition (iv) in addition to conditions (i), (ii), and (iii).
\begin{itemize}
\item[(iv)]
For $i$ and $j$ ($1\leq i, j \leq n$), $\{ a_{b_i c_k} \}_{k=1}^{n} \neq \{ a_{b_k c_j} \}_{k=1}^{n}$.
\end{itemize}
According to the additional condition (iv), all $2n$ vertices should have distinct edge color sets that are incident to them. First, show that we cannot  edge-locating color of $K_{n, n}$ with $n+1$ colors. On the contradiction, assume that we can do it. Let the first column
be colored with $n$ colors, and don't use color $n+1$. Hence, all rows use $n+1$ as a color for a vertex. This shows that every row does not have one of the colors $i$ for $1\le i\le n$. On the other hand, since the first column does not use  color $n+1$, at least one column has $n+1$ as a color and does not take one of the colors in $1\le i \le n$ say $j$ and thus, this column and the row, which has no $j$ as  a color, have the same edge color code. That is a contradiction. Therefore, 
$\chi'_L(K_{n, n})\ge n+2$.

In the following, we give a way of edge-locating coloring
$K_{n, n}$,
by presentation  $n \times n$ matrix

\begin{center}
$A= \begin{pmatrix}
1 & 2 & 3 & \ldots & n-1 & n \cr
2 & 3 & 4 & \ldots & n & n+1 \cr
3 & 4 & 5 & \ldots & n+1 & n+2 \cr
4 & 5 & 6 & \ldots & n+2 & 1 \cr
\vdots & \vdots & \vdots &  & \vdots & \vdots \cr
n-1 & n & n+1 & \ldots & n-5 & n-4\cr
n+1 & n+2 & 1 & \ldots & n-3 & n-2
\end{pmatrix}. $
\end{center}
All colors are on module
$n$.
Also,
$a_{b_n, c_i} = a_{b_{n-1}, c_i} + 2$,
for
$1\leq i \leq n$.
One can check that matrix $A$ satisfies conditions (i) - (iv).
\end{proof}

In Figure 1, we give an illustration of Theorem \ref{3}, when
$n= 3$
and
$m=2$.
In this case, the edge-locating chromatic number is $4$, and the matrix
$A$
is
$ \begin{pmatrix}
1 & 4\cr
2 & 1\cr
3 & 2
\end{pmatrix} $.

\begin{figure}[!h]
\begin{center}
\label{FigE2}
  \includegraphics[width=8cm]{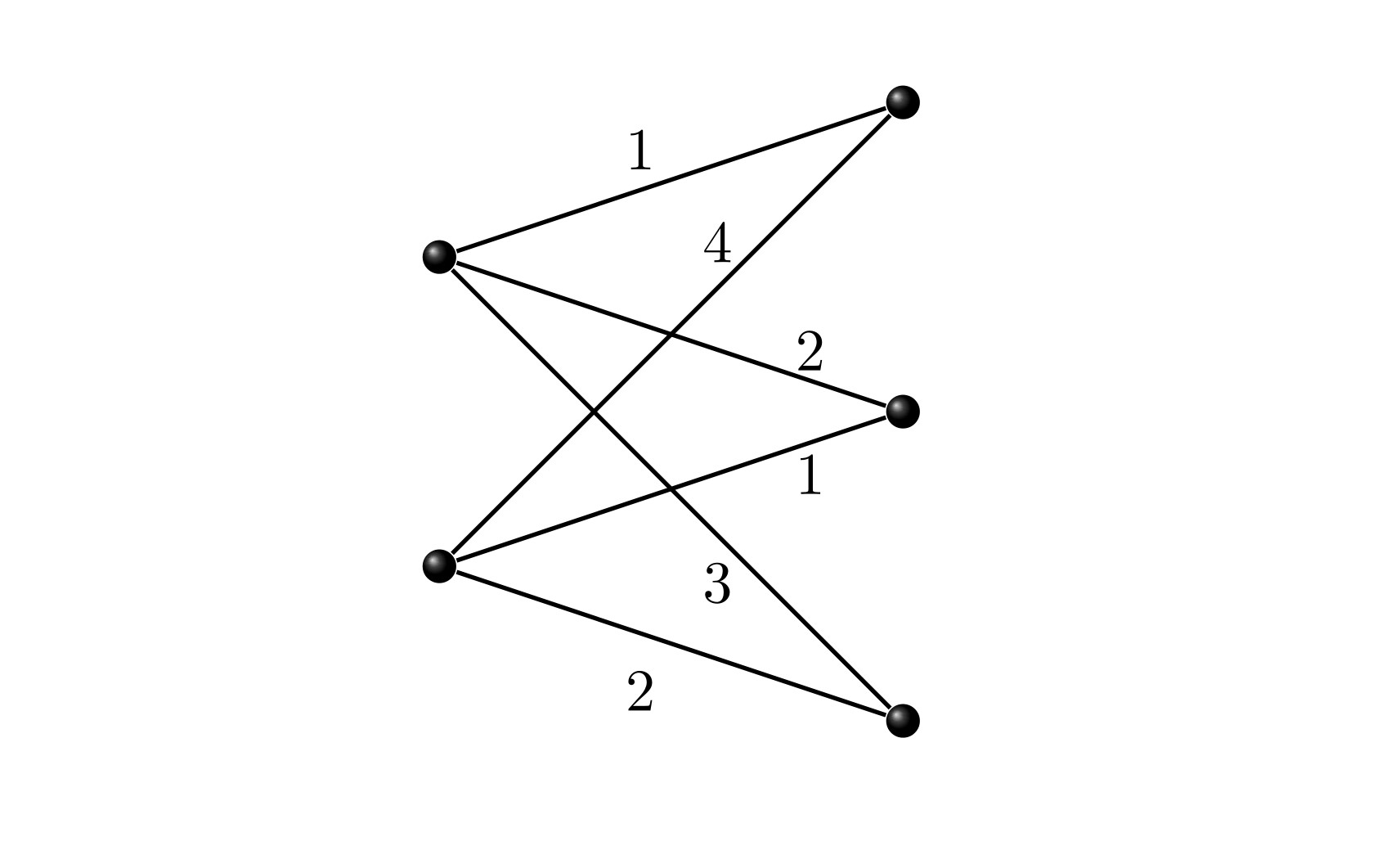}
\end{center}
\caption{\ An edge-locating coloring for $K_{3, 2}$.}
\end{figure}

For an integer
$n$,
the graph
$K_{1, n}$
is called a star graph and is shown by
$S_n$.
\begin{theorem}
Let
$G$
be a graph with size
$m\geq 2$.
Then
$\chi'_L(G) = m$
if and only if
$G \in \{P_4, C_3, C_4, S_m \}$.
\end{theorem}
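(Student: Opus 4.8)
My plan is to handle the two implications separately. The backward implication is a direct check: $\chi'_L(P_4)=3$ is Proposition~\ref{prop 1}, $\chi'_L(C_3)=3$ and $\chi'_L(C_4)=4$ are Theorem~\ref{2}, and since all $m$ edges of $S_m=K_{1,m}$ are pairwise adjacent we get $m=\chi'(S_m)\le\chi'_L(S_m)\le m$; in each case the size equals the value claimed. For the forward implication I would reduce to pairs of independent edges. Any proper edge coloring using exactly $m-1$ colors partitions $E(G)$ into matchings of sizes $2,1,\dots,1$, so it amounts to choosing one independent pair $\{e_1,e_2\}$, giving it a common color, and coloring all other edges distinctly; conversely every such merged-pair coloring uses $m-1$ colors, and since refining a coloring only subdivides color classes (which can only help separate vertices), $\chi'_L(G)\le m-1$ if and only if some merged-pair coloring is edge-locating. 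Calling such a pair \emph{good}, this says $\chi'_L(G)=m$ if and only if $G$ has no good pair, i.e.\ $G$ has no two independent edges at all, or every independent pair is \emph{bad} (not good).

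The next step is to describe the bad pairs. In the all-distinct coloring the code of a vertex $w$ is $\big(d(w,e)\big)_{e\in E(G)}$, which is injective, so after merging $e_1$ and $e_2$ two vertices $u\neq v$ have the same code exactly when $d(u,e)=d(v,e)$ for all $e\notin\{e_1,e_2\}$ and $\min\{d(u,e_1),d(u,e_2)\}=\min\{d(v,e_1),d(v,e_2)\}$. Looking at the first and last edges of a shortest $u$--$v$ path, I expect to show that the first condition is very rigid: either $u\sim v$, which forces $\deg u=\deg v=2$ with $\{e_1,e_2\}$ being the two outer edges of a path $a\,u\,v\,b$ (the sub-cases with a pendant endpoint collapsing to $m\le 3$), or $d(u,v)\ge 2$, which forces $u$ and $v$ to be leaves with pendant edges $uw,vw'$ such that $d(w,e)=d(w',e)$ for all $e\notin\{e_1,e_2\}$; in either shape the second condition then holds automatically. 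Hence a bad pair always has one of these two local shapes together with its distance-equality, and both are strong constraints.

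It remains to assemble the classification. If $G$ has no two independent edges, the classical fact that a connected graph in which every two edges share a vertex is a star or $K_3$ gives $G\in\{C_3,S_m\}$. The cases $2\le m\le 3$ are immediate, since $P_3=S_2$, $P_4$, $C_3$ and $S_3$ are the only connected graphs of those sizes and all are in the list. So assume $m\ge 4$ and that $G$ has an independent pair, every one of which is bad. For $m=4$ this is a finite check over the connected graphs of size $4$ other than the star $S_4$ (namely $C_4$, the paw, $P_5$ and the chair), of which only $C_4$ survives. For $m\ge 5$ I would instead argue that a good pair must exist: from a leaf whose support has another neighbour, a vertex of degree at least $3$, or two edges at distance at least $2$, one can exhibit an independent pair that is of neither shape, because the required distance-equalities cannot hold in a graph with so many edges, with the few leftover configurations checked by hand. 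I expect this last step to be the main obstacle, since each of the two rigid patterns is individually realizable and the real work is ruling out a single graph with $m\ge 5$ all of whose independent edge pairs obey one pattern or the other.
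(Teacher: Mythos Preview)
Your reduction to ``good'' versus ``bad'' independent pairs is sound, and the two bad-pair shapes you anticipate can indeed be verified: an adjacent conflict $u\sim v$ forces $\deg u=\deg v=2$ with $\{e_1,e_2\}$ the outer edges of a path $a\,u\,v\,b$, after which the remaining equalities $d(u,e)=d(v,e)$ pin $G$ down to $P_4$ or $C_4$; a non-adjacent conflict forces $d(u,v)\ge 3$, both $u,v$ leaves, and $e_1,e_2$ their pendant edges. Once that is in hand, for $m\ge 5$ every bad pair consists of two pendant edges, so any non-pendant edge that is independent from some other edge would already be good; chasing this shows $G$ must be a star. So the programme can be completed, but the passages you flag with ``I expect to show'' and ``the few leftover configurations checked by hand'' are real work that still has to be written out.

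The paper takes a much shorter route that sidesteps the bad-pair taxonomy entirely. It splits on $\Delta(G)$: if $\Delta(G)=2$ then $G$ is a path or a cycle and Proposition~\ref{prop 1} and Theorem~\ref{2} finish immediately. If $\Delta(G)\ge 3$ and $G\ne S_m$, it takes a vertex $v$ of maximum degree, a vertex $u\ne v$ of degree at least~$2$, an edge $e=uw$ with $w\ne v$, and an edge $e'=vz$ with $z\notin\{u,w\}$; it then merges only $\{e,e'\}$ into one colour and gives every other edge its own colour, checking the codes of the handful of involved vertices directly. Your approach proves strictly more (it describes all obstructions rather than exhibiting a single good pair), at the cost of the finite $m=4$ check and the $m\ge 5$ structural endgame; the paper's one-shot construction avoids both.
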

\begin{proof}
If
$G \in \{P_4, C_3, C_4, S_m \}$,
then we have nothing to prove. For the other, assume first that
$\Delta(G) = 2$.
So, Proposition \ref{prop 1} and Theorem \ref{2} conclude that
$G \in \{P_4, C_3, C_4, S_2\}$.
Let
$\Delta(G) = k$,
for
$k\geq 3$.
For a contradiction, suppose that
$G \ncong S_m$,
for any
$m \geq 3$.
Let $v$ be a vertex of $G$ with ${deg}(v)=k\le m-1$.
Thus, there exists at least a vertex $u \neq v$ of $G$ such that
$1 < {deg}(u) \leq k$. Hence, there exists edge $e=uw$ in $G$, where $w \neq v$.
Since $k \geq 3$, we have edges $e'=vz$ and $e''=vx$ such that at least one of $z$ or $x$  is not in $\{u, w\}$.
Now, we assign color
$1$
to edges
$e$
and
$e'$,
and color the other edges with distinct colors
$2, 3, \ldots, m-1$ such that, without loss of generality, it is assigned color $2$ to edge $vu$ and color $3$ to edge $e''=vx$.
We will show that this coloring is an edge-locating coloring of
$G$.
For this, we have $c_{\pi}(v)=(0,0,0,\ldots)$,   $c_{\pi}(u)=(0,0,1,\ldots)$, $c_{\pi}(z)=(0,1,1,\ldots)$,  $c_{\pi}(w)=(0,1,2,\ldots)$ if $w$ is not adjacent to $x$, and if $w$ is  adjacent to $x$ the color of $wx$ is $4$, then $d(w,C_4)=0$  and $d(z,C_4)\le 1$. Therefore, these five vertices
have distinct edge color codes. For a vertex $y \notin \{v,u,w,z,x\}$, it is incident to at least one new edge with a new color. Hence $c_{\pi}(y)
\ne c_{\pi}(t)$ for $t\ne y$. This is a contradiction, and then $G=S_m$.
\end{proof}

In the following, we present some bounds for edge-locating coloring of a graph.
\begin{theorem}
Let
$G$
be a graph with order
$n$
and
$diam(G)=d\geq 3$. Then,

$$
\log_{d}n + 2 \leq \chi'_L(G).
$$
\end{theorem}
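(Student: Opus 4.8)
The plan is to bound the number of edge color codes that can arise and then conclude by the pigeonhole principle. Write $k=\chi'_L(G)$ and fix an optimal edge-locating coloring with classes $\mathcal C_1,\dots,\mathcal C_k$. Since distinct vertices have distinct codes $c_\pi(v)=(d(v,\mathcal C_1),\dots,d(v,\mathcal C_k))$, it suffices to show that at most $d^{k-2}$ codes can occur; this yields $n\le d^{k-2}$, equivalently $\log_d n\le k-2$, which is the assertion.

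The first step is to pin down the range of a single entry. For a vertex $v$ and a color $i$, let $V_i^{0}$ be the set of all endpoints of edges in $\mathcal C_i$; then $d(v,\mathcal C_i)=d(v,V_i^{0})$, so every entry lies in $\{0,1,\dots,\mathrm{ecc}(v)\}\subseteq\{0,1,\dots,d\}$, and I would try to sharpen this to $\{0,1,\dots,d-1\}$ by taking a nearest edge $e=xy\in\mathcal C_i$ with $d(v,x)\le d(v,y)$ and examining the last edge on a shortest path from $v$ to $x$. Moreover, since every vertex is incident to an edge, every code contains at least one $0$, and a vertex of degree at least $2$ contributes at least two $0$'s. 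Combined, these observations already push the number of admissible codes well below $d^{k}$.

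The step I expect to be the real obstacle is extracting the remaining factor of order $d^{2}$, i.e.\ getting the count down to exactly $d^{k-2}$. Here I would exploit the proper-edge-coloring structure: fix a vertex $w$ with $\deg(w)=\Delta(G)$, so $\Delta(G)\le\chi'(G)\le k$, and the edges at $w$ use, among others, two colors, say colors $1$ and $2$. Because $w\in V_1^{0}\cap V_2^{0}$ we have $d(v,\mathcal C_1)\le d(v,w)$ and $d(v,\mathcal C_2)\le d(v,w)$ for every $v$, while $d(v,w)$ is itself controlled by the remaining coordinates along a shortest path from $v$ to $w$. The aim is to show that this forces the pair $(d(v,\mathcal C_1),d(v,\mathcal C_2))$ to carry no separating power beyond the other $k-2$ entries, so that the restricted map $v\mapsto(d(v,\mathcal C_3),\dots,d(v,\mathcal C_k))$ is already injective and $n\le d^{k-2}$ follows.

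Finally, the low-degree situations should be inspected separately: if $\Delta(G)\le 2$ then $G$ is a path or a cycle, where $\chi'_L$ is known exactly from Proposition \ref{prop 1} and Theorem \ref{2}, and the inequality is to be verified directly, so one may otherwise assume $\Delta(G)\ge 3$. The genuinely delicate points are thus (i) the improvement from $d$ to $d-1$ on the non-peripheral coordinates and (ii) making the claim that colors $1$ and $2$ at $w$ are redundant for separation precise, which will require a careful geodesic argument rather than pure counting.
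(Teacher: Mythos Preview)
The inequality you are trying to prove is false. Take $G=P_n$ with $n\ge 4$: then $d=n-1\ge 3$ and $\chi'_L(P_n)=3$ by Proposition~\ref{prop 1}, so the asserted bound $n\le d^{\,k-2}$ becomes $n\le (n-1)^{1}=n-1$, which fails for every such $n$. Hence no counting argument can deliver $n\le d^{\,k-2}$, and both your outline and the paper's own proof are attempting the impossible.

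Your plan and the paper's proceed differently but founder on the same overcount. The paper argues that for every vertex $v$ one coordinate of $c_\pi(v)$ equals $0$ and another equals $1$, and then passes directly to $n\le d^{k-2}$. Even setting aside that the ``$=1$'' claim already fails (in the standard locating $3$-coloring of $P_5$ the code of the fourth vertex is $(0,0,2)$), the positions of the two ``determined'' entries vary with $v$, so one cannot simply strip two coordinates from the count. Your route is genuinely different: you fix two \emph{specific} colors $1,2$ at a vertex $w$ of maximum degree and try to show that the truncated map $v\mapsto(d(v,\mathcal C_3),\dots,d(v,\mathcal C_k))$ is already injective. But there is no mechanism for this---that $\mathcal C_1$ and $\mathcal C_2$ both meet $w$ says nothing about how they separate vertices far from $w$, and nothing transfers that separation to the remaining $k-2$ classes. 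The path example makes the obstruction concrete: with $k=3$ only one coordinate would remain, and a single coordinate with at most $d+1$ possible values cannot distinguish $n=d+1$ vertices injectively while also carrying the required $0$.
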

\begin{proof}
The edge color code of any vertex of
$G$
has
$\chi'_L(G)$
coordinates. Since each vertex is incident to at least one edge, at least one coordinate is $0$.
Let
$v$
be a vertex, and
$e=vu$.
There exists an edge
$e'=uw$
that
$w \neq v$.
The color of
$e'$
is different from
$e$, 
and the coordinate of the edge color code of
$v$
according to color
$e'$
is
$1$.
So, the two coordinates of any vertex of
$G$
are determined, and other coordinates can be filled by
$k$, $0\leq k \leq d-1$.
Since in any edge-locating coloring, each vertex must have a unique edge color code,
$n\leq d^{(\chi'_L(G)-2)}$,
and the result is obtained.
\end{proof}

\begin{theorem}
Let
$G$
be a graph with
$diam(G)=d\geq 3$
and
$\chi'_L(G)= k$. Then,

$$
\log_{(d-1)} [\dfrac{n_i}{\binom{k}{i}}] + i \leq k, \hspace{3mm} for \hspace{2mm} 1\leq i \leq \Delta.
$$

Where,
$n_i$
is the number of vertices of degree
$i$.
\end{theorem}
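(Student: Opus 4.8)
The plan is to adapt the counting argument behind the preceding theorem, but now carried out separately for each degree class, exploiting the fact that the degree of a vertex determines exactly how many zeros appear in its edge color code. First I would fix an edge-locating coloring of $G$ realizing $k=\chi'_L(G)$ colors and let $\pi=(\mathcal{C}_1,\ldots,\mathcal{C}_k)$ be the induced ordered partition of $E(G)$. Let $v$ be any vertex with $\deg(v)=i$. Since the coloring is proper, the $i$ edges incident with $v$ receive $i$ pairwise distinct colors; hence $d(v,\mathcal{C}_j)=0$ for precisely these $i$ indices $j$, while $d(v,\mathcal{C}_j)\geq 1$ for the remaining $k-i$ indices. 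In other words, $c_\pi(v)$ is a $k$-tuple with exactly $i$ coordinates equal to $0$.

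Next I would count the possible codes of degree-$i$ vertices. The set of $i$ positions holding the zeros can be chosen in $\binom{k}{i}$ ways. For each of the other $k-i$ positions, arguing exactly as in the proof of the previous theorem, the value $d(v,\mathcal{C}_j)$ lies in $\{1,2,\ldots,d-1\}$, since $diam(G)=d$ and every color class has a representative edge within distance $d-1$ of $v$. Consequently the number of edge color codes available to a vertex of degree $i$ is at most $\binom{k}{i}(d-1)^{k-i}$. Because an edge-locating coloring forces distinct vertices to have distinct codes, we get $n_i\leq \binom{k}{i}(d-1)^{k-i}$; dividing by $\binom{k}{i}$ and applying $\log_{d-1}$ (valid as $d\geq 3$) yields $\log_{d-1}\bigl[\,n_i/\binom{k}{i}\,\bigr]\leq k-i$, which is the asserted inequality. (When $n_i<\binom{k}{i}$, in particular when $n_i=0$, the left side is negative or $-\infty$, so the statement is trivial there.)

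The main obstacle is the clean justification of the bound $d(v,\mathcal{C}_j)\leq d-1$ for every vertex $v$ and every color class $\mathcal{C}_j$ not meeting an edge at $v$; this is the same delicate point on which the preceding theorem rests. If one prefers to sidestep it, the crude bound $d(v,\mathcal{C}_j)\leq d=diam(G)$ is immediate and gives $n_i\leq \binom{k}{i}d^{k-i}$, from which the stated inequality still follows, since $\log_{d-1}x\geq \log_d x$ for $x\geq 1$. A minor housekeeping point to mention is that in an optimal coloring of a connected $G$ every color is used, so each $d(v,\mathcal{C}_j)$ is a well-defined finite integer and the argument makes sense.
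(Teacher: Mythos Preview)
Your core counting argument is exactly the paper's: a degree-$i$ vertex has precisely $i$ zero coordinates in its code, the set of those positions can be chosen in $\binom{k}{i}$ ways, and the remaining $k-i$ coordinates are asserted to lie in $\{1,\dots,d-1\}$, giving at most $\binom{k}{i}(d-1)^{k-i}$ possible codes and hence $n_i\le\binom{k}{i}(d-1)^{k-i}$. The paper phrases this via a pigeonhole on the zero-set (``$[n_i/\binom{k}{i}]$ vertices share the same incident colors'') rather than by counting all codes directly, but the resulting inequality is identical.

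Your proposed workaround for the bound $d(v,\mathcal{C}_j)\le d-1$, however, does not go through: the inequality $\log_{d-1}x\ge\log_d x$ (for $x\ge1$) points the wrong way for your purpose. From the crude estimate $n_i\le\binom{k}{i}\,d^{\,k-i}$ you obtain $\log_d\bigl(n_i/\binom{k}{i}\bigr)\le k-i$, but since $\log_{d-1}$ of the same quantity is \emph{larger} than $\log_d$ of it, this does not imply $\log_{d-1}\bigl(n_i/\binom{k}{i}\bigr)\le k-i$. Equivalently, applying $\log_{d-1}$ to $n_i/\binom{k}{i}\le d^{\,k-i}$ yields only $(k-i)\log_{d-1}d$, which exceeds $k-i$. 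So the ``sidestep'' produces a strictly weaker conclusion, not the stated one; if you cannot justify the $d-1$ bound, you are left with the same unproved step the paper's argument rests on.
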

\begin{proof}
We can color the incident edges of a vertex of
$G$
of degree
$i$
with
$\binom{k}{i}$
ways. Thus,
$[\dfrac{n_i}{\binom{k}{i}}]$
numbers of vertices of degree
$i$
have the same colored incident edges. So, the other coordinates of this vertices can be filled by
$\ell$, $1\leq \ell \leq d-1$.
Therefore,
$[\dfrac{n_i}{\binom{k}{i}}]\leq (d-1)^{(\chi'_L(G)-i)}$,
for any
$1\leq i \leq \Delta$,
and the result is immediate.
\end{proof}

\section{Complete graphs and matchings}
In this section, we determine the edge-locating coloring of complete graphs and the complete graphs minus some matchings. Then we generalize this subject to  arbitrary graphs.
\subsection{Complete graphs}
A {\em matching} $M$ of a graph is a set of independent edges. A vertex is $M$-{\em saturated}
if it is incident with an edge of $M$, and $M$-{\em unsaturated} otherwise. A matching is said to be {\em maximum} if for any other matching $M^*, |M| \geq |M^*|$. A matching $M$ is perfect if it saturates all vertices of $G$.
Let
$K_n - e$
denote the complete graph
$K_n$
minus one edge.
\begin{theorem}\label{evencomplete}
For any even $n \geq 4$, $\chi'_L(K_n) = n+1$.
\end{theorem}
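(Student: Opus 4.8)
The plan is to prove the two inequalities $\chi'_L(K_n)\ge n+1$ and $\chi'_L(K_n)\le n+1$ separately, using throughout that the diameter of $K_n$ is $1$. Because of this, for any proper edge colouring with classes $\mathcal{C}_1,\dots,\mathcal{C}_k$ and any vertex $v$ one has ${d}(v,\mathcal{C}_i)\in\{0,1\}$, equal to $0$ exactly when $v$ is $\mathcal{C}_i$-saturated. Hence the edge color code of $v$ is a $0/1$-vector, and it is completely determined by the set $S(v)$ of colours not appearing on edges incident to $v$; since $\deg v=n-1$ and the colouring is proper, $|S(v)|=k-(n-1)$. For $k\le n-2$ the graph is not even properly $k$-edge-colourable, and for $k=n-1$ every $S(v)$ is empty so all codes coincide. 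For $k=n$ we have $|S(v)|=1$ for every $v$, so distinct codes would force $v\mapsto S(v)$ to be a bijection between the $n$ vertices and the $n$ colours; but the set of vertices missing a fixed colour $c$ is the complement of the $\mathcal{C}_c$-saturated set, which has even size since $n$ is even and $\mathcal{C}_c$ is a matching, so it cannot be a singleton. Thus $\chi'_L(K_n)\ge n+1$.

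For the upper bound it suffices to exhibit one proper $(n+1)$-edge-colouring of $K_n$ in which the sets $S(v)$ (now of size $2$) are pairwise distinct. I realise $K_n$ as $K_{n+2}-\{a,b\}$, where $K_{n+2}$ carries its standard cyclic $1$-factorisation on vertex set $\mathbb{Z}_{n+1}\cup\{\infty\}$, with perfect-matching colour classes $M_c=\{\{\infty,c\}\}\cup\{\{c-i,c+i\}:1\le i\le n/2\}$ for $c\in\mathbb{Z}_{n+1}$; this uses exactly $n+1$ colours, each class loses at most two edges upon deleting two vertices, so all $n+1$ colours survive in $K_n$, and the restricted colouring is still proper. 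Every vertex of $K_{n+2}$ is saturated by every colour, so in $K_n$ we get $S(v)=\{c(va),c(vb)\}$, a genuine $2$-set because $va$ and $vb$ are adjacent. Taking $a=0$, $b=1$ and recording $c(\{u,0\})=u\cdot 2^{-1}$, $c(\{u,1\})=(u+1)\cdot 2^{-1}$ in $\mathbb{Z}_{n+1}$, together with $c(\{\infty,0\})=0$ and $c(\{\infty,1\})=1$, a brief case check finishes the argument: if $S(v)=S(w)$ with $v\ne w$, then $c(v0)\ne c(w0)$ (they share vertex $0$), so $c(v0)=c(w1)$ and $c(v1)=c(w0)$; for $v,w\in\mathbb{Z}_{n+1}$ this gives $v=w+1$ and $w=v+1$, i.e. $2\equiv 0\pmod{n+1}$, impossible for $n\ge 4$, and the case $v=\infty$ forces $n=2$. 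Hence $\chi'_L(K_n)\le n+1$.

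The only genuinely delicate point is this distinctness step. A more symmetric attempt, namely deleting a single vertex from $K_{n+1}$ under its cyclic near-$1$-factorisation, fails precisely when $3\mid n+1$: there two vertices share the missing pair $\{x,2x\}$. Passing to $K_{n+2}$ — where every vertex starts out saturated by every colour, so that $S(v)$ is the pair of two "symmetric" edge colours rather than one intrinsic colour plus one deleted colour — and deleting two cyclically consecutive vertices is exactly what makes the bookkeeping collapse to the harmless identity $2\not\equiv 0\pmod{n+1}$. An alternative route, in the spirit of the proof of Theorem \ref{3}, would be to encode the colouring as an explicit $(n+1)$-symbol array and verify the analogues of conditions (i)--(iv) directly; the vertex-deletion argument above is shorter and avoids that casework.
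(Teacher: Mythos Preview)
Your proof is correct, and in both halves it is cleaner than the paper's.

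For the lower bound, the paper argues by a counting of colour-class sizes: with $n$ colours and $\binom{n}{2}$ edges, it asserts (after a ``simple verification'') that exactly $n/2$ colours occur $n/2$ times and the remaining $n/2$ colours occur $n/2-1$ times, whence each vertex misses one colour from the second group and only $n/2$ codes arise. Your parity observation---that the number of vertices missing a fixed colour is $n-2|\mathcal{C}_c|$, hence even, so can never equal $1$---reaches the same conclusion with no bookkeeping and no unverified step.

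For the upper bound, the paper builds the colouring by the formula $\alpha(e_{ij})=i+j-2\pmod{n+1}$, which is exactly the cyclic near-$1$-factorisation you mention (delete one vertex from $K_{n+1}$). As you correctly diagnose, this collapses when $3\mid n+1$, and the paper handles that case by an ad~hoc local modification of a few edge colours. Your move to the genuine $1$-factorisation of $K_{n+2}$ and deletion of two consecutive vertices sidesteps this entirely: the distinctness of the $S(v)$'s reduces uniformly to $2\not\equiv 0\pmod{n+1}$ and $3\not\equiv 0$ only in the degenerate $n=2$ case. The trade-off is that the paper's construction stays inside $K_n$ and is more explicit edge by edge, while yours is more conceptual but requires the reader to unpack the $1$-factorisation of a larger graph; for a proof, yours is the better bargain.
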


\begin{proof}
First, we show that $\chi'_L(K_n) \ge n+1$. Let $n$ be an even integer with $n \geq 4$.
Let $V(K_n)=\{v_1, v_2, \ldots, v_n\}$. 
Then $\chi'_L(K_n) \geq n$.
However, we will show that $\chi_L(K_n) \not= n$ for any even $n \geq 4$.
Let $c$ be any proper edge  locating  coloring of $K_n$ with $n$ colors. Then, each of at least $n/2$ colors will appear exactly
$n/2$ times each, and each of at most $n/2$ colors will appear at most
$n/2-1$ times each. A simple verification shows that,
precisely, $n/2$ different colors (say, colors $1, 2, \dots, \frac{n}{2}$) appear $n/2$ times each, and
other colors (namely, colors $\frac{n}{2}+1, \frac{n}{2}+2, \dots, n$) will appear exactly $n/2 -1$ times each.
Therefore, every vertex is incident to all colors except color $k$ for some $k \in \{\frac{n}{2}+1, \frac{n}{2}+2, \cdots, n\}$.
This means that there are only $\frac{n}{2}$ different edge color codes  for all $n$ vertices of $K_n$
with respect to coloring $c$. Thus, $c$ is not an edge-locating coloring of $K_n$,
and so $\chi'_L(K_n) \geq n+1$ for any even $n \geq 4$.\\

Now we provide an edge-locating coloring of $K_n$ with $n+1$ colors. As it is well known, the edge color code of any vertex $v$ is formed by $n+1$
coordinates, in which two of its coordinates are $1$ and the others are $0$.
Let $e_{ij}$ be an edge of $K_n$ with two end vertices $v_i$, and $v_j$ where $i<j$.\\
For defining $n+1$-edge-locating coloring function $\alpha$ on $K_n$, we consider two cases.\\
If $3\nmid n+1$, then we define $\alpha$ on $K_n$ as follows. $$\alpha(e_{ij})=j+i-2\ (\emph{mod}\ n+1)\ \emph{for}\ 1\le i<j\le n.$$
In this case, for any vertex $v_i$ two coordinates $2i-2$ and $i-2$ of the edge color code of $v_i$ are $1$ and the others are $0$.
Since $2i-2=j-2$ and $i-2=2j-2$, $3i=0 (\emph{mod} n+1)$. If $i\ne j$, then $\{2i-2,i-2\}\ne \{2j-2,j-2\}$.
If $3\mid n+1$ and $n+1=3k$, then we define $\alpha$ on $K_n$ as follows.\\ If  $ e_{ij}\notin\{e_{(lk-1)}, e_{(lk)}: 1\le l\le k-2\}$
$$\alpha(e_{ij})=j+i-2\ (\emph{mod}\ n+1)\ \emph{for}\ 1\le i<j\le n.$$  For
$ e_{ij}\in\{e_{(l(k-1))}, e_{(lk)}: 1\le l\le k-2\}$, we define
$$\alpha(e_{l(k-1)})=k+l-2\ (l<k-2);\ \alpha(e_{lk})=k+l-3\ (\emph{mod}\ n+1).$$
In this case, for any vertex $v_i, (i\ne k-1)$, the two coordinates $2i-2$ and $i-2$ of the edge color code of $v_i$ are $1$ and the others are $0$.
For $v_{k-1}$ the two coordinates $k-2$ and $k-3$ of the edge color code of $v_{k-1}$ are $1$ and the others are $0$. Similar to the above method,
one can show that, for $k-1\notin \{i, j\}$,  $\{2i-2,i-2\}\ne \{2j-2,j-2\}$ and for $j\ne k-1$,  $\{2j-2,j-2\}\ne \{k-2,k-3\}$.
\end{proof}

For instance, consider the edge-locating colorings of $K_{8}$  and $K_{10}$ represented by the two matrices $(8 \times 8)$-matrices and $(10 \times 10)$-matrices below, where $8+1=9=3\times 3$ and $3\nmid 10+1$.
The entries $ij$ are the colors of the edge $e_{ij}$ with two end-vertices $v_i$ and $v_j$, where $i<j$. In the matrix, we only wrote the color of $e_{ij}$ with $i<j$. For instance, in $K_8$, the vertex $v_3$ is incident to the colors\\ $\{c(e_{13})=3,\ c(e_{23})=4,\ c(e_{34})=5,\ c(e_{35})=6,\ c(e_{36})=7,\ c(e_{37})=8,\ c(e_{38})=9\}$\\ or in $K_{10}$, the vertex $v_4$ is incident to the colors\\ $\{c(e_{14})=3,\ c(e_{24})=4,\ c(e_{34})=5,\ c(e_{45})=7,\ c(e_{46})=8,\ c(e_{47})=9,\ c(e_{48})=10,\ c(e_{49})=11,\ c(e_{4\ 10})=1\}$.

\begin{center}
$
K_{8} :
\begin{pmatrix}
   - & 1 & 3 & 2 & 4 & 5 & 6 & 7\\
   - & - & 4 & 3 & 5 & 6 & 7 & 8\\
   - & - & - & 5 & 6 & 7 & 8 & 9\\
   - & - & - & - & 7 & 8 & 9 & 1\\
   - & - & - & - & - & 9 & 1 & 2\\
   - & - & - & - & - & - & 2 & 3\\
  - &  - & - & - & - & - & - & 4\\
   - &  - & - & - & - & - & - & - \\

\end{pmatrix}
3\mid 8+1=9
$

\vspace{3mm}

$K_{10} :
\begin{pmatrix}
     - & 1 & 2 & 3 & 4 & 5 & 6 & 7 & 8 & 9\\
     - &  - & 3 & 4 & 5 & 6 & 7 & 8 & 9 & 10\\
      - & - & - & 5 & 6 & 7 & 8 & 9 & 10 & 11\\
     - &  - & - & - & 7 & 8 & 9 & 10 & 11 & 1\\
      - & - & - & - & - & 9 & 10 & 11 & 1 & 2\\
     - &  - & - & - & - & - & 11 & 1 & 2 & 3\\
     - & - & - & - & - & - & - & 2 & 3 & 4\\
     - & - & - & - & - & - & - & - & 4 & 5 \\
     - & - & - & -  & - & - & - & - & - & 6\\
      - & - & - & -  & - & - & - & - & - & -\\

\end{pmatrix}
    3\nmid 10+1=11
$

\end{center}

\vspace{3mm}

\begin{theorem}\label{complete}
For any odd $n \geq 3$, $\chi'_L(K_n)=n$.
\end{theorem}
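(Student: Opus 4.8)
The plan is to prove the two inequalities separately; the lower bound is immediate, and the upper bound follows from a single clean observation about near-$1$-factorizations of $K_n$.

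For the lower bound, recall from Section~2 that $\chi'(G)\le \chi'_L(G)$ for every graph $G$. Since $n$ is odd, $K_n$ admits no $1$-factorization, so its chromatic index is $\chi'(K_n)=n$ (one more than $\Delta(K_n)=n-1$). Hence $\chi'_L(K_n)\ge n$.

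For the upper bound, I would start from \emph{any} proper edge coloring $c$ of $K_n$ with exactly $n$ colors and show that it is automatically an edge-locating coloring. The key counting step is this: $K_n$ has $\binom{n}{2}=n(n-1)/2$ edges, and since $n$ is odd, every matching of $K_n$ has at most $(n-1)/2$ edges; distributing $n(n-1)/2$ edges among $n$ color classes, each of which is a matching, forces each color class to have exactly $(n-1)/2$ edges, i.e.\ to be a near-perfect matching missing exactly one vertex. Dually, each vertex has degree $n-1$ and is incident to $n-1$ distinct colors, so it misses exactly one color; a double-counting argument then shows that the map $\mu$ sending a vertex $v$ to its unique missing color is a bijection $V(K_n)\to\{1,\dots,n\}$.

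Next I would compute the edge color codes. Since $\mathrm{diam}(K_n)=1$, for a vertex $v$ and a color $i$ we have $d(v,\mathcal{C}_i)=0$ whenever $v$ is incident to an edge of color $i$ (that is, whenever $i\ne \mu(v)$), while $d(v,\mathcal{C}_{\mu(v)})=1$ because $\mathcal{C}_{\mu(v)}$ is a nonempty matching avoiding $v$ and every edge of $K_n$ lies within distance $1$ of $v$. Thus $c_{\pi}(v)$ is the $n$-tuple having a $1$ in coordinate $\mu(v)$ and $0$ elsewhere. As $\mu$ is injective, distinct vertices receive distinct codes, so $c$ is an edge-locating coloring and $\chi'_L(K_n)\le n$. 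Combining the two bounds yields $\chi'_L(K_n)=n$.

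There is essentially no serious obstacle: the only point requiring care is the counting argument that pins every color class down to size exactly $(n-1)/2$, which is what makes the "missing color" function well defined and bijective; once the diameter-$1$ observation is in hand, the rest is a one-line verification. It is worth stating explicitly that for odd $n$, \emph{every} proper $n$-edge-coloring of $K_n$ is an edge-locating coloring, in sharp contrast with the even case treated in Theorem~\ref{evencomplete}.
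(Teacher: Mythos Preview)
Your proof is correct and takes a genuinely different route from the paper. For the upper bound, the paper exhibits one explicit coloring $\alpha(e_{ij})=i+j-2\pmod{n}$ and checks directly that the coordinate equal to $1$ in the code of $v_i$ sits at position $2i-2\pmod n$, which is injective because $n$ is odd. You instead prove the stronger fact that \emph{every} proper $n$-edge-coloring of $K_n$ (for $n$ odd) is an edge-locating coloring: the pigeonhole count forces every color class to be a near-perfect matching, so each vertex misses exactly one color and the ``missing color'' map is a bijection; the diameter-$1$ observation then reads off the codes immediately. Your argument is more conceptual and explains \emph{why} the odd case behaves so differently from the even case of Theorem~\ref{evencomplete} (where no $n$-coloring can work precisely because the missing-color map fails to be injective), while the paper's argument has the virtue of producing an explicit coloring formula that feeds directly into the matrix examples and the later matching results. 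Your lower bound via $\chi'(K_n)=n$ for odd $n$ is also cleaner than the paper's appeal to $\Delta(K_n)=n-1$, which on its face only yields $\chi'_L(K_n)\ge n-1$.
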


\begin{proof}
Let $n$ be an odd integer with $n \geq 3$.
Let $V(K_n)=\{v_1, v_2, \cdots, v_n\}$. Since $\Delta(K_n)=n-1$ then $\chi'_L(K_n) \geq n$.
We are going to  show that $\chi_L(K_n)=n$ for any odd $n \geq 3$.
We define $\alpha$ on $K_n$ as follows: $$\alpha(e_{ij})=j+i-2\ (\emph{mod}\ n)\ \emph{for}\ 1\le i<j\le n.$$
In this case, for any vertex $v_i$, one coordinate $(2i-2)$  of the edge color code of $v_i$ is $1$ and the others are $0$.
Since for $i\ne j$, $2i-2\ne 2j-2$, this coloring is an edge-locating coloring. Thus $\alpha$ is an edge-locating chromatic coloring of $K_n$, and so $\chi'_L(K_n) = n$ for odd $n$.
\end{proof}

\begin{center}

$K_{11}:
\begin{pmatrix}
      -\ \ \ 1 & 2 & 3 & 4 & 5 & 6 & 7 & 8 & 9 & 10\\
       -\ \ \ - & 3 & 4 & 5 & 6 & 7 & 8 & 9 & 10 & 11\\
       -\ \ \ - & - & 5 & 6 & 7 & 8 & 9 & 10 & 11 & 1\\
       -\ \ \ - & - & - & 7 & 8 & 9 & 10 & 11 & 1 & 2\\
      -\ \ \ - & - & - & - & 9 & 10 & 11 & 1 & 2 & 3\\
      -\ \ \ - & - & - & - & - & 11 & 1 & 2 & 3 & 4\\
      -\ \ \ - & - & - & - & - & - & 2 & 3 & 4 & 5\\
      -\ \ \ - & - & - & - & - & - & - & 4 & 5 & 6\\
      -\ \ \ - & - & - & - & - & - & - & - & 6 & 7\\
      -\ \ \ - & - & - & - & - & - & - & - & - & 8\\
      -\ \ \ - & - & - & - & - & - & - & - & - & -\\
\end{pmatrix}
$

\end{center}

For $k \geq 1$, let $M_k$ be a matching with $k$ edges.\\
In the proof of Theorem below, we can also use the method of the proof of Theorem \ref{KnminusMatchingeven} by the proof of Theorem \ref{complete}.
\begin{theorem}\label{KnminusMatchingodd}
For $n \geq 1$ and $1 \leq k \leq n$, we have that
\[  \chi_L'(K_{2n+1}\backslash M_k) = \left\{
\begin{array}{ll}
      2n+1  &  \mbox{, if $1 \leq k \leq n-1$,}   \\
      2n & \mbox{, if $k=n$.} \\
\end{array}
\right. \]
\end{theorem}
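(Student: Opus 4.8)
The plan is to prove the two inequalities in each case separately, exploiting two preliminary facts about $G:=K_{2n+1}\backslash M_k$ for $1\le k\le n$. First, $G$ has diameter $2$ and minimum degree at least $2n-1$: only the $M_k$-saturated vertices lose an edge, and two vertices of $G$ are non-adjacent only when they span a deleted edge. Consequently each vertex is non-adjacent to at most one other vertex, so for any vertex $v$ and any color class $\mathcal C_i$ one has $d(v,\mathcal C_i)\in\{0,1\}$, with value $0$ exactly when $v$ is incident to an edge of color $i$. Hence a proper edge-coloring of $G$ is edge-locating if and only if distinct vertices are incident to distinct \emph{sets} of colors. Second, $\mathrm{Aut}(K_{2n+1})=S_{2n+1}$ acts transitively on the $k$-element matchings of $K_{2n+1}$, so $\chi'_L(K_{2n+1}\backslash M_k)$ depends only on $k$, and we may choose $M_k$ to be any convenient $k$-matching.

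For the lower bounds, when $1\le k\le n-1$ I would count edges: $|E(G)|=n(2n+1)-k$, while with at most $2n$ colors every color class is a matching on $2n+1$ vertices and hence has at most $n$ edges, for a total of at most $2n^2<n(2n+1)-k$. Thus $\chi'(G)\ge 2n+1$, and since $\chi'(G)\le\chi'_L(G)$ we obtain $\chi'_L(G)\ge 2n+1$. When $k=n$, the unique $M_n$-unsaturated vertex still has degree $2n$, so $\Delta(G)=2n$ and therefore $\chi'_L(G)\ge\chi'(G)\ge 2n$.

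For the upper bounds I would recycle the coloring from the proof of Theorem~\ref{complete}, but on $2n+1$ colors: set $\alpha(e_{ij})=i+j-2\pmod{2n+1}$ on $K_{2n+1}$. Every color class is then a near-perfect matching of size $n$ (the class of color $c$ omits the unique vertex $v_{i_0}$ with $2i_0-2\equiv c\pmod{2n+1}$), and $v_i$ is incident to all colors except $2i-2$; since $i\mapsto 2i-2$ is a bijection mod $2n+1$, $\alpha$ is already an edge-locating $(2n+1)$-coloring of $K_{2n+1}$. Using the transitivity remark, I take $M_k$ to consist of $k$ edges lying inside one color class, say the class $M^{(c)}$ of color $c$. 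In $G=K_{2n+1}\backslash M_k$ each $M_k$-saturated vertex $v_i$ has lost precisely its color-$c$ edge, so it is incident to every color except the pair $\{2i-2,c\}$ (and $2i-2\ne c$ since $v_i\ne v_{i_0}$), whereas each unsaturated $v_i$ is still incident to every color except $2i-2$. As the singletons $\{2i-2\}$ are pairwise distinct and a singleton never equals a two-element set, all color codes are distinct, so $\chi'_L(G)\le 2n+1$ when $k\le n-1$. For $k=n$ the same construction removes the whole class $M^{(c)}$, leaving $2n$ colors: now $v_{i_0}$ is incident to all of them while every other $v_i$ misses exactly the color $2i-2\ne c$, so all $2n+1$ color codes are again distinct and $\chi'_L(G)\le 2n$.

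I expect the only genuinely delicate point to be the reduction to a matching contained in a single color class: one must justify that $\mathrm{Aut}(K_{2n+1})=S_{2n+1}$ is transitive on $k$-matchings (for $k=n$ this is transitivity on near-perfect matchings, which follows from $S_{2n}$ being transitive on the perfect matchings of $K_{2n}$), so that deleting an arbitrary $M_k$ yields a graph isomorphic to the one obtained by deleting the convenient matching inside $M^{(c)}$. Everything else is the routine bookkeeping of which color each vertex misses, together with the preliminary verification that the color codes take values only in $\{0,1\}$.
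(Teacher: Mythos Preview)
Your proof is correct and follows essentially the same strategy as the paper: for the upper bound, restrict the $(2n+1)$-coloring of $K_{2n+1}$ from Theorem~\ref{complete} by deleting a submatching of one monochromatic class (the whole class when $k=n$), and for the lower bound use degree considerations. The only notable difference is in the lower bound for $1\le k\le n-1$: you argue by edge counting that $\chi'(G)\ge 2n+1$ since $n(2n+1)-k>2n\cdot n$, whereas the paper simply notes that the at-least-two $M_k$-unsaturated vertices each have degree $2n$ and would therefore receive identical all-zero codes under any proper $2n$-coloring; your treatment of the transitivity reduction and of the distinctness of the restricted color codes is also more explicit than the paper's rather terse ``Thus, the proof is observed.''
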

\begin{proof}
For $1 \leq k \leq n-1$, the graph $K_{2n+1}\backslash M_k$ has at least two vertices of degree $2n$.
Thus, $\chi_L'(K_{2n+1}\backslash M_k) \geq 2n+1$. But if $k=n$, since $K_{2n+1}\setminus M_n$ has exactly one vertex of degree $2n$, then $\chi_L'(K_{2n+1}\backslash M_n) \geq 2n$.
To obtain an edge-locating $(2n)$-coloring of
$K_{2n+1}\backslash M_n$ from the edge-locating $(2n+1)$-coloring of
$K_{2n+1}$ (in Theorem \ref{complete}) we can  remove the edges of a monochromatic $M_n$. Thus, the proof is observed. 
\end{proof}

If we note the proof of Theorem \ref{evencomplete}, there exists a $\chi_L'(K_{2n})$ with $2n+1$ colors such that each edge-locating
color class has at least $n-1$ edges and we can easily to see exactly $2n$ color classes have $n-1$ edges, and one color class has $n$ edges. The edge coloring is such that it can be said that color $2n-1$ was used for $n$ edges, and the rest of the colors were used for $n-1$ edges each. Therefore, we have.
\begin{theorem}\label{KnminusMatchingeven}
Let  $n \geq 2$. Then 
  $\chi_L'(K_{2n}\backslash M_{k})=2n$ if $k \in \{n-1, n\}$.
\end{theorem}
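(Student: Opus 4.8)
The plan is to establish the two matching bounds $\chi'_L(K_{2n}\setminus M_k)\ge 2n$ and $\chi'_L(K_{2n}\setminus M_k)\le 2n$ for $k\in\{n-1,n\}$: the lower bound by a counting argument on edge color codes, and the upper bound by deleting one suitable color class from the $(2n+1)$-coloring of $K_{2n}$ constructed in the proof of Theorem \ref{evencomplete}. A small observation is used throughout. In $H:=K_{2n}\setminus M_k$ (with $k\le n$) the only non-neighbour of a vertex $v$ is its $M_k$-partner, if it has one; hence for every edge $e$ not incident with $v$ one has $d(v,e)=1$, and therefore $d(v,\mathcal{C}_c)=1$ whenever $v$ is incident with no edge of the nonempty class $\mathcal{C}_c$. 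Consequently the edge color code $c_\pi(v)$ is exactly the $0/1$ indicator of the set of colors missing at $v$, so two vertices of $H$ have equal codes if and only if they have the same ``missing set''.

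For the lower bound, recall that in $H$ the $2k$ vertices saturated by $M_k$ have degree $2n-2$ while the $2n-2k$ unsaturated ones have degree $2n-1$. If $k=n-1$, the two unsaturated vertices $u,w$ are adjacent (the edge $uw$ is not deleted) and have degree $2n-1$; in any proper edge coloring using at most $2n-1$ colors each of $u,w$ is incident with every color, so $c_\pi(u)=c_\pi(w)=(0,\dots,0)$, a contradiction. If $k=n$, every vertex has degree $2n-2$: a proper coloring with at most $2n-2$ colors gives every vertex the all-zero code, while a proper coloring with exactly $2n-1$ colors makes each vertex miss exactly one color, so by the observation above there would be at most $2n-1$ distinct codes among the $2n$ vertices -- again impossible. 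Hence $\chi'_L(K_{2n}\setminus M_k)\ge 2n$ in both cases.

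For the upper bound I would start from the edge-locating $(2n+1)$-coloring $\alpha$ of $K_{2n}$ of Theorem \ref{evencomplete}. As recorded in the paragraph preceding the statement, under $\alpha$ exactly one color class has $n$ edges -- and it is then a perfect matching, since a matching of $n$ edges covers all $2n$ vertices -- while the other $2n$ classes each have $n-1$ edges, hence are matchings covering exactly $2n-2$ vertices. Now delete one class: the perfect-matching class when $k=n$, and a size-$(n-1)$ class when $k=n-1$. Since $\mathrm{Aut}(K_{2n})=S_{2n}$ acts transitively on matchings of any fixed size, we may assume the deleted matching is precisely $M_k$. What remains is a proper coloring of $H=K_{2n}\setminus M_k$ with exactly $2n$ nonempty colors, and it only remains to check that the missing sets are pairwise distinct. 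For a vertex $v$ saturated by $M_k$, its unique $M_k$-edge carried the color of the deleted class, so after deletion the set of colors missing at $v$ among the surviving $2n$ colors equals the $2$-element missing set of $v$ under $\alpha$ in $K_{2n}$; these $2$-sets were pairwise distinct because $\alpha$ is edge-locating. When $k=n-1$, each of the two unsaturated vertices $u,w$ was already missing the color of the deleted class under $\alpha$ (it is incident with no edge of that class), so after deletion it misses a single color, and the two singletons are distinct because the two original $2$-sets were distinct yet both contained the deleted color; a singleton can never equal a $2$-set, so there is no collision with the saturated vertices either. This gives $\chi'_L(K_{2n}\setminus M_k)\le 2n$, and with the lower bound the equality follows.

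The step that needs the most care is this last verification that deleting a color class preserves the edge-locating property: one must justify that the distances to color classes really collapse to values in $\{0,1\}$ -- which rests on the fact that a vertex of $H$ fails to be adjacent only to its $M_k$-partner and that every surviving class is nonempty (this is where $n\ge 2$ is used) -- and then carefully bookkeep the missing sets, in particular confirming that the two unsaturated vertices in the $k=n-1$ case turn into singletons and stay distinguishable both from each other and from the saturated vertices. Making the symmetry reduction explicit (so that the deleted matching may be taken to be the prescribed $M_k$) is also part of a careful write-up.
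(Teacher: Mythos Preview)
Your proof is correct and follows the paper's approach of deleting a monochromatic matching from the $(2n+1)$-coloring of Theorem~\ref{evencomplete}. In fact you go further than the paper's very brief argument: you supply a full lower-bound argument (which the paper omits), you verify explicitly that the restricted coloring remains edge-locating via the ``missing-set'' bookkeeping, and you make the symmetry reduction to an arbitrary $M_k$ explicit.
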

\begin{proof}
By Theorem \ref{evencomplete}, we have that $\chi_L'(K_{2n})=2n+1$ for $n \geq 2$.
 As we mentioned in the above, there is
exactly one perfect matching $M_n$ with  a monochromatic and $2n$ matchings $M_{n-1}$ with  a monochromatic each.
Thus, we get an edge-locating $2n$-coloring of $K_n \backslash M_k$ for $ k\in \{n-1, n\}$.
\end{proof}

As an immediate result of Theorems \ref{evencomplete} and \ref{KnminusMatchingeven}, we have.
\begin{corollary}
Let $m$ be a positive integer and $m\le n-1$. Then there exist $m$ matchings $M_{n-1}$ in which $\chi_L'(K_{2n}-m(M_{n-1})\cup M_n)=2n-m$.

 \end{corollary}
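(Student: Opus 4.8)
The plan is to iterate the construction of Theorem~\ref{KnminusMatchingeven}: start from the edge-locating $(2n+1)$-coloring of $K_{2n}$ produced in Theorem~\ref{evencomplete} and peel off monochromatic matchings one colour at a time. Recall from the remark preceding Theorem~\ref{KnminusMatchingeven} that in that colouring exactly one colour class is a perfect matching $M_n$ and each of the other $2n$ colour classes is a matching $M_{n-1}$. Deleting the $n$ edges of the monochromatic $M_n$ and the $n-1$ edges of $m-1$ further monochromatic classes $M_{n-1}$ removes exactly $m$ colours from the palette, leaving a proper edge colouring of $G := K_{2n} - \big(M_n \cup (m-1)(M_{n-1})\big)$ with $2n+1-m$ colours; this matches the claimed value $2n-m$ once we account for the indexing (the corollary's ``$m$ matchings $M_{n-1}$'' together with the single $M_n$ makes $m+1$ matchings removed in total, so I would first double-check the statement's bookkeeping and, if needed, read it as: removing $M_n$ plus $m$ copies of $M_{n-1}$ leaves chromatic index-type count $2n+1-(m+1)=2n-m$).

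First I would fix notation and write $G = K_{2n} - \big(M_n \cup M_{n-1}^{(1)} \cup \cdots \cup M_{n-1}^{(m)}\big)$, where $M_n, M_{n-1}^{(1)},\dots,M_{n-1}^{(m)}$ are the monochromatic classes of colours chosen from among the $2n+1$ colours of the Theorem~\ref{evencomplete} colouring. Since we delete whole colour classes, the restriction of $\alpha$ to $E(G)$ uses precisely $2n+1-(m+1)=2n-m$ colours and is still a proper edge colouring. The lower bound $\chi_L'(G)\ge 2n-m$ I would get from $\chi_L'(G)\ge \chi'(G)\ge \Delta(G)$: deleting $m+1$ matchings from $K_{2n}$ lowers the degree of each vertex by at most the number of deleted matchings containing it, and because at most one of the deleted matchings is perfect, one can choose the $m$ non-perfect matchings to all avoid a common vertex $v$ (they have at most $n-1$ edges each, so there is slack), giving $\deg_G(v) = (2n-1) - 1 = 2n-2$; hmm, this gives $2n-2$, not $2n-m$, so for the lower bound I would instead invoke that the edge color codes in the inherited colouring already distinguish all vertices, hence no colouring with fewer than... — actually the clean route is: the inherited colouring is edge-locating (shown next), so $\chi_L'(G)\le 2n-m$, and for the matching lower bound one simply observes $\chi_L'(G)\ge\chi'(G)$ and argues $\chi'(G)=2n-m$ directly since $K_{2n}$ is class~1 with chromatic index $2n-1$ and we removed $m+1$ of its $2n-1$ colour classes, leaving a graph whose chromatic index is exactly $2n-1-(m+1)+1$? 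I would need to be careful here and may instead phrase the result purely as an existence statement (``there exist $m$ matchings for which $\chi_L' = 2n-m$'') so that the inherited colouring gives the upper bound and a short degree/parity argument gives the matching lower bound.

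The real content, and the step I expect to be the main obstacle, is verifying that the inherited colouring is still \emph{edge-locating} on $G$ — that is, that distinct vertices still receive distinct edge color codes after deletion. In $K_{2n}$ each vertex $v_i$ has a code with exactly two $1$'s (the colours of its two ``missing'' non-incident... wait, rather: in the colouring of Theorem~\ref{evencomplete} each vertex is incident to $2n-1$ edges carrying $2n-1$ distinct colours, so its code has $2n-1$ zeros and two ones, the two ones corresponding to the two colours it misses). After deleting $m+1$ colour classes, a vertex's code (now of length $2n-m$) records which of the surviving colours are incident to it; since in $K_{2n}$ the colours incident to $v_i$ are all colours except two, and distinct vertices miss distinct pairs, I need the surviving colour set to still separate these pairs — which can fail if two vertices differ only in the deleted colours. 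So the crux is to \emph{choose} the $m+1$ deleted matchings (one perfect, $m$ near-perfect) so that the pairs of ``missing colours'' of the vertices remain pairwise distinguishable within the surviving palette, and, for any pair $v_i, v_j$ that does become code-equal on the nose, to fall back on the distance coordinates (some $d(v,\mathcal{C})=2$) to separate them. Here the structure of $\alpha$ from Theorem~\ref{evencomplete} — where the two $1$-coordinates of $v_i$ are $2i-2$ and $i-2$ modulo $2n+1$ — lets one compute exactly which pairs collide and check that a suitable choice of which colours to delete avoids all collisions; writing this verification carefully, splitting on the divisibility cases $3 \mid 2n+1$ versus $3 \nmid 2n+1$ exactly as in Theorem~\ref{evencomplete}, is where the work lies. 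I would present it as: (1) set up $G$ and the inherited colouring; (2) quote the explicit codes from Theorem~\ref{evencomplete}; (3) show a concrete choice of $m+1$ monochromatic matchings to delete; (4) check codes remain distinct, using distance-$2$ coordinates as a tie-breaker when needed; (5) conclude the upper bound, and (6) finish with the matching lower bound via $\chi'$.
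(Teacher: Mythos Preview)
Your approach---strip monochromatic colour classes from the $(2n+1)$-colouring of $K_{2n}$ built in Theorem~\ref{evencomplete}---is exactly what the paper intends; the paper offers no argument beyond calling the corollary ``an immediate result of Theorems~\ref{evencomplete} and~\ref{KnminusMatchingeven}'', so your more careful outline is already more than the paper provides.  You are right that the substance lies in verifying that the inherited colouring on $G=K_{2n}\setminus(M_n\cup M_{n-1}^{(1)}\cup\cdots\cup M_{n-1}^{(m)})$ is still edge-locating, and your plan to do this via the explicit missing-colour pairs $\{2i-2,\,i-2\}$ from Theorem~\ref{evencomplete} is the right one.

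Two places in the plan need repair.  First, your proposed ``distance-$2$ tie-breaker'' is never available: for $n\ge 3$ every vertex of $G$ has degree at least $2n-2-m\ge n-1\ge 2$, while each surviving colour class (a near-perfect matching) covers $2n-2$ of the $2n-1$ other vertices, so every vertex is adjacent in $G$ to some endpoint of every surviving class and all edge colour codes are $\{0,1\}$-vectors.  Distinctness of codes is therefore equivalent to distinctness of the sets $\{2i-2,\,i-2\}\setminus S$ (where $S$ is the set of deleted colours), a purely combinatorial condition on the choice of $S$ with no metric fallback---step~(4) of your plan must succeed on the zero-pattern alone.  Second, the lower-bound discussion can be made clean: each vertex $v_i$ is missed by exactly the two near-perfect colour classes of colours $2i-2$ and $i-2$, so if you include both of these among the $m$ deleted near-perfect classes (possible once $m\ge 2$) then $\deg_G(v_i)=(2n-1)-1-(m-2)=2n-m$, whence $\chi_L'(G)\ge\chi'(G)\ge\Delta(G)=2n-m$; this replaces the muddled $\chi'$ computation.
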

\subsection{Matchings}
In other words, an edge-locating coloring of a graph $G$
is a partition of its edge set into matchings such that the vertices of
$G$ are distinguished by the distance of the matchings. The minimum number of the matchings of
$G$ that admit an edge-locating coloring is the edge-locating chromatic number of
$G$.
\begin{theorem}\label{the-match}
Let $G$ be a graph with order $n\geq 5$ and size $m$.
If $G$ has a perfect matching, then $\chi'_{L}(G) \leq m- n/2 + 1$.
This bound is sharp for cycle $C_6$ and path $P_6$.
\end{theorem}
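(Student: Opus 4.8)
The plan is to build an explicit edge-locating coloring using $m-n/2+1$ colors. Fix a perfect matching $M$ of $G$, so $|M|=n/2$; color every edge of $M$ with a single color (say color $1$), and color the remaining $m-n/2$ edges with pairwise distinct new colors $2,3,\dots,m-n/2+1$. This is a proper edge coloring, since the edges of $M$ are mutually non-adjacent and all other edges receive distinct colors, and it uses exactly $m-n/2+1$ colors. Hence it suffices to check that this coloring is edge-locating; that yields $\chi'_L(G)\le m-n/2+1$. Because $M$ saturates every vertex, the first coordinate $d(v,\mathcal{C}_1)$ of every edge color code equals $0$, so two distinct vertices $u,v$ fail to be separated precisely when $d(u,e)=d(v,e)$ for every non-matching edge $e$. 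I would assume this for a contradiction.

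The key structural observation is the following. If some non-matching edge $e$ is incident to $u$ but not to $v$, then $d(u,e)=0\ne d(v,e)$, a contradiction; therefore every non-matching edge incident to $u$ has $v$ as its other endpoint. Since $u$ is incident to exactly one edge of $M$, this forces $\deg(u)\le 2$, with the only possible non-matching edge at $u$ being $uv$ itself; symmetrically $\deg(v)\le 2$. Write $\bar u,\bar v$ for the $M$-partners of $u,v$; note $\bar u\ne\bar v$ because $M$ is a matching.

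Now I would distinguish two cases. If $uv\in M$, then $u$ and $v$ are incident to no non-matching edge at all, so $\deg(u)=\deg(v)=1$ and $\{u,v\}$ spans a connected component of $G$, impossible since $G$ is connected with $n\ge 5$. If $uv\notin M$, then $u,v,\bar u,\bar v$ are four distinct vertices. Choosing a non-matching edge $\bar u w$ at $\bar u$ with $w\ne u$ — one exists, otherwise $\{u,\bar u\}$ would be a component — one computes $d(u,\bar u w)=1$, while $\bar u\notin N(v)\cup\{v\}$ gives $d(v,\bar u)\ge 2$, so $d(v,\bar u w)=1$ forces $w\in\{v,\bar v\}$; hence $N(\bar u)\subseteq\{u,v,\bar v\}$, and symmetrically $N(\bar v)\subseteq\{u,v,\bar u\}$, while $N(u)\subseteq\{v,\bar u\}$ and $N(v)\subseteq\{u,\bar v\}$. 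Thus the component of $u$ lies inside the four-element set $\{u,v,\bar u,\bar v\}$, again contradicting connectedness and $n\ge 5$. This contradiction shows the coloring is edge-locating, and the bound follows.

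For sharpness, $C_6$ has $n=6$, $m=6$ and $P_6$ has $n=6$, $m=5$, both admit a perfect matching, and the bound reads $\chi'_L(C_6)\le 4$ and $\chi'_L(P_6)\le 3$, which are matched by Theorem~\ref{2} and Proposition~\ref{prop 1} respectively. The main obstacle is the verification that the coloring separates all pairs of vertices; the delicate part is the case $uv\in E(G)$, where I must step one vertex out from $u$ and $v$ to their matching partners in order to trap the whole component in at most four vertices, and it is exactly here that the hypothesis $n\ge 5$ is essential (the analogous statement fails for $C_4$, where $\chi'_L(C_4)=4>3=m-n/2+1$).
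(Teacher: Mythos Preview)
Your coloring is exactly the one the paper uses, and your verification strategy---assume two vertices $u,v$ share the same code and trap their component inside a set of size at most four---is a cleaner alternative to the paper's own argument, which instead fixes a single vertex $v$ and runs a lengthy case analysis on $\deg(v)\in\{1,2,\ge 3\}$ to distinguish $v$ from every other vertex. Your observation that every non-matching edge incident to $u$ must be $uv$ itself immediately yields $\deg(u),\deg(v)\le 2$, bypassing most of the paper's casework.

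There is one small slip. In Case~2 you assert that a non-matching edge $\bar u w$ with $w\ne u$ exists, ``otherwise $\{u,\bar u\}$ would be a component.'' That justification is only valid when $uv\notin E(G)$ (so that $N(u)=\{\bar u\}$); if $uv\in E(G)$ then $N(u)=\{v,\bar u\}$, and $\{u,\bar u\}$ is not a component even when $\deg(\bar u)=1$. The repair is immediate and does not require the edge to exist: if no such edge exists then $N(\bar u)=\{u\}\subseteq\{u,v,\bar v\}$ trivially; if one does, your distance computation forces $w\in\{v,\bar v\}$. Either way $N(\bar u)\subseteq\{u,v,\bar v\}$, and symmetrically for $\bar v$, so the component still lies in $\{u,v,\bar u,\bar v\}$ and the contradiction with $n\ge 5$ stands.
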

\begin{proof}
Let
$M$
be a perfect matching of
$G$.
Color all edges of
$M$
with color
$1$,
and other edges with distinct colors. We will show that this coloring is an edge-locating coloring of
$G$.
Note that vertices of
$G$
cannot be distinguished by the color
$1$.
Consider an arbitrary vertex
$v$
of
$G$.

Suppose first that
${N}(v) = \{u\}$.
So,
$vu \in M$.
It is enough to investigate the vertices that have distance one from edge(s)
$e=uw$,
for
$w \in {N}(u) \setminus \{v\}$.
Let
${N}(u) \setminus \{v\}=\{w\}$.
If
$\deg(w) \geq 3$,
there exists a vertex
$x$
adjacent to
$w$
such that
$xw \notin M$.
Hence, any vertex of
${N}(w)\setminus \{u\}$
and vertex
$v$
are distinguished by the color of
$xw$.
If
$\deg(w)=2$,
since
$n\geq5$,
there exists an edge
$f=zy$
such that
$\{z\}= {N}(w)\setminus \{u\}$
and
$y \notin \{v, u, w\}$.
Thus,
$f \notin M$,
and
$v$
and
$z$
are distinguished by the color of
$f$.
Assume that
$|{N}(u) \setminus \{v\}| \geq 2$.
A vertex
$z$
has distance one from edges
$e=uw$,
for
$w \in {N}(u) \setminus \{v\}$,
when
${N}(u) \setminus \{v\} \subseteq {N}(z)$.
In this situation, there exists a vertex
$x \in {N}(u) \setminus \{v\}$
such that
$xz \notin M$.
Therefore,
$v$
and
$z$
have different distances from
$xz$,
and the result is immediate.

Assume that
$\deg(v)=2$.
There exists at least an edge
$e=vu \notin M$,
for a vertex
$u$
of
$G$.
Assign color
$2$
to
$e$.
The only vertex that can be a candidate for the edge color code equal to
$v$
is
$u$.
Since
$u$
can not be a pendant vertex, we have
${N}(u)\setminus \{v\} \neq \emptyset$.
If
$|{N}(u)\setminus \{v\}| \geq 2$,
there exists a vertex
$w \in {N}(u)\setminus \{v\}$
such that
$uw \notin M$.
Thus, the color of
$uw$
distinguishes
$v$
and
$u$.
Let
$|{N}(u)\setminus \{v\}| = 1$.
Suppose that
${N}(u)\setminus \{v\}=\{z\}$.
So,
$uz \in M$.
If
$\deg(z) \geq 2$,
then there exists a vertex
$w$
such that
$zw \notin M$.
If there is edge
$vw$,
then we have a cycle with vertices
$v, u, z$,
and
$w$.
Hence, we must have at least one vertex in this cycle with a degree greater than
$2$.
Clearly, vertices
$z$
or
$w$
can have a degree of more than
$2$.
Since
$vw \in M$,
in all possible cases, vertices
$v$
and
$u$
have different an edge color codes. Also, if
$\deg(z)=1$,
there exist an edge
$f \notin M$
with
${d}(v, f) = {d}(u, f)-1$,
and the result is available.

Finally, let
$\deg(v) \geq 3$.
In this case, consider vertices
$x, y$, 
and
$z$
as neighbors of
$v$
such that
$xv \in M$.
This implies that
$yv, zv \notin M$.
Now, the colors of
$yv$
and
$zv$
distinguish 
$v$
with the other vertices of
$G$.
\end{proof}

Theorem \ref{the-match} can be extended for maximum matching.
\begin{theorem}\label{the-max}
Let $G$ be a graph with order $n\geq 5$ and size $m$. If $G$ has a max matching $M$ with $|M|=k$, then
$\chi'_{L}(G) \leq m- k + 1$. This bound is sharp for cycle $C_5$, path $P_5$, star $K_{1,n-1}$ for $n\ge 2$ and double star $S_{p,1}$.
\end{theorem}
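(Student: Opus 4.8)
The plan is to mimic the proof of Theorem~\ref{the-match}, with $M$ replacing the perfect matching. Colour every edge of $M$ with colour $1$, and give each of the remaining $m-k$ edges a private colour from $\{2,\dots,m-k+1\}$; since a connected graph on $n\ge 5$ vertices cannot have all of its edges in $M$, this uses exactly $m-k+1$ colours. Two observations drive everything. First, because $M$ is maximum, the $M$-unsaturated vertices form an independent set, so every neighbour of an $M$-unsaturated vertex is $M$-saturated. Second, as a consequence, $d(v,\mathcal C_1)=0$ exactly when $v$ is $M$-saturated: if $v$ is unsaturated it lies on no edge of $M$, yet it has a saturated neighbour, so $d(v,\mathcal C_1)=1$. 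Thus colour $1$ already separates the $M$-saturated vertices from the $M$-unsaturated ones.

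Now suppose $u\ne v$ but $c_\pi(u)=c_\pi(v)$. Every non-$M$ edge $e$ carries a private colour whose coordinate equals $0$ at precisely the two endpoints of $e$; hence $u$ and $v$ are incident to exactly the same set of non-$M$ edges. Since such an edge has only two endpoints, one of the following holds: \textbf{(a)} neither $u$ nor $v$ is on a non-$M$ edge, so each is a pendant $M$-saturated vertex; or \textbf{(b)} $uv$ is a non-$M$ edge and is the only non-$M$ edge at $u$ and at $v$, which forces $\deg(u),\deg(v)\le 2$ (the edge $uv$, plus at most one edge of $M$).

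In case~(b), if $\deg(u)=1$ or $\deg(v)=1$ --- say $\deg(v)=1$ --- then the only edge at $v$ is $uv\notin M$, so $v$ is $M$-unsaturated while $u$ is $M$-saturated, and the second observation separates them. If $\deg(u)=\deg(v)=2$, then $ua,vb\in M$ for some $a,b$, where $a\ne b$ (otherwise two edges of $M$ meet at $a$), so $a-u-v-b$ is a path. If $a$ has a neighbour $w\notin\{u,b\}$, then $aw$ is a non-$M$ edge with $d(u,aw)=1$ and $d(v,aw)\ge 2$, a contradiction; otherwise $N(a)\subseteq\{u,b\}$, so $\deg(a)=2$ and $a\sim b$, and then symmetrically either $b$ has a neighbour outside $\{v,a\}$ (contradiction as before) or $\deg(b)=2$, which makes $G\cong C_4$, contradicting $n\ge 5$. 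Case~(a) is handled the same way: $n\ge 5$ forces $\deg(a),\deg(b)\ge 2$ (else $G\cong K_2$) and rules out $G\cong P_4$, so, after possibly swapping $(u,a)$ with $(v,b)$, the vertex $a$ has a neighbour $w\notin\{u,b\}$, and the non-$M$ edge $aw$ again gives $d(u,aw)=1<2\le d(v,aw)$. In every case $c_\pi(u)\ne c_\pi(v)$, so the colouring is edge-locating and $\chi'_L(G)\le m-k+1$.

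For sharpness, plug into known values: $C_5$ has $\chi'_L=4$ (Theorem~\ref{2}) and $m-k+1=5-2+1$; $P_5$ has $\chi'_L=3$ (Proposition~\ref{prop 1}) and $m-k+1=4-2+1$; in $K_{1,n-1}$ all edges are pairwise adjacent, so $\chi'_L=m=n-1$ while $k=1$; and for $S_{p,1}$ the larger centre has degree $p+1$, whence $p+1=\chi'(S_{p,1})\le\chi'_L(S_{p,1})\le m-k+1=(p+2)-2+1=p+1$. The main obstacle is the bookkeeping in case~(b) (and, similarly, case~(a)): one must carefully excise the exceptional small graphs $P_4$ and $C_4$ using $n\ge 5$, and rule out the accidental coincidences among $u,v,a,b,w$ that arise when verifying $d(v,aw)\ge 2$.
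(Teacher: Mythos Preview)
Your proof is correct, but it takes a different route from the paper's. The paper argues by reduction: it attaches a new pendant vertex to each $M$-unsaturated vertex of $G$, producing a supergraph $G'$ of order $2n-2k$ and size $m+n-2k$ that now has a perfect matching of size $n-k$, and then simply invokes Theorem~\ref{the-match} on $G'$ to get $\chi'_L(G')\le (m+n-2k)-(n-k)+1=m-k+1$. Implicitly, the specific colouring produced in the proof of Theorem~\ref{the-match} (perfect matching in colour~$1$, all other edges distinct) restricts to $G$ as exactly the colouring you wrote down, since every newly added pendant edge lies in the perfect matching and hence carries colour~$1$; the paper leaves this restriction step to the reader. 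Your argument instead verifies directly, via the saturated/unsaturated dichotomy and the case analysis on the shared non-$M$ edges, that this colouring is edge-locating on $G$ itself. The trade-off: the paper's reduction is a two-line argument once Theorem~\ref{the-match} is available (at the cost of an unspoken ``restrict back to $G$'' step), while your approach is self-contained and makes the role of maximality transparent through the observation that unsaturated vertices form an independent set. One small wrinkle in your case~(b): when $N(a)\subseteq\{u,b\}$ you jump to $\deg(a)=2$, but $\deg(a)=1$ is also possible there; it is disposed of by the same symmetry-and-$P_4$ manoeuvre you already use (if $\deg(a)=1$ then $b\not\sim a$, so either $b$ has a neighbour outside $\{v,a\}$ or $G\cong P_4$), so the argument goes through unchanged once this sub-case is noted.
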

\begin{proof}
Let $M$ be a  maximum matching of $G$  with $|M|=k$. It is clear that $M$ saturates $2k$ vertices, and $n-2k$  vertices cannot be saturated by $M$.
We add $n-2k$ vertices to $G$ and make  each of them adjacent to a  vertex that is not saturated. Then, the resulted graph is of order $2n-2k$, size
$m+n-2k$ and has  a perfect matching of size $k+n-2k=n-k$. Now, Theorem \ref{the-match} implies that $\chi'_{L}(G) \leq m+n-2k -(n-k)+ 1=m-k+1$.
\end{proof}

\begin{theorem}
Let $G$ be a graph with order $n \geq 4$ and size $m$. If $G$ has $k$ edge-disjoint perfect matchings $M=M_1 \cup M_2 \cup \dots \cup M_k$ and
$G \backslash M$ is a connected spanning subgraph of $G$, then $\chi_L'(G) \leq m-kn/2 +k$.

\end{theorem}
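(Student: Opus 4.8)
The plan is to generalize the construction from the proof of Theorem~\ref{the-match}: collapse each perfect matching to a single color class and give every remaining edge its own color. Since the $M_i$ are pairwise edge-disjoint perfect matchings, $G$ has even order, each $M_i$ has exactly $n/2$ edges, and $|M| = kn/2$. I would assign color $i$ to every edge of $M_i$ for $1 \le i \le k$, and assign the $m - kn/2$ edges of $G\backslash M$ pairwise distinct new colors $k+1, \dots, m - kn/2 + k$. Each color class is a matching (each $M_i$ is one, and every singleton edge is one), so this is a proper edge coloring, and it uses exactly $k + (m - kn/2) = m - kn/2 + k$ colors. It then remains to verify that it is an edge-locating coloring.

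The key observation to record first is that whenever $e = vw \in E(G\backslash M)$, the color $c(e)$ is borne by no other edge, so $d(x, \mathcal{C}_{c(e)}) = 0$ holds precisely for $x \in \{v, w\}$. Because $G\backslash M$ is a spanning subgraph of a connected graph on $n \ge 4 \ge 2$ vertices, no vertex is isolated in $G\backslash M$, so every vertex of $G$ is incident with at least one such uniquely colored edge; thus the coordinate corresponding to $c(e)$ already pins down $v$ up to its $G\backslash M$-neighbourhood.

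Next I would argue by contradiction: suppose distinct vertices $v, u$ have $c_{\pi}(v) = c_{\pi}(u)$. Choose an edge $e = vw \in E(G\backslash M)$ incident with $v$. From the observation, $d(u, \mathcal{C}_{c(e)}) = d(v, \mathcal{C}_{c(e)}) = 0$ forces $u \in \{v, w\}$, hence $u = w$. In fact, for every edge $f \in E(G\backslash M)$ one has "$v$ incident with $f$" $\iff d(v, \mathcal{C}_{c(f)}) = 0 \iff d(w, \mathcal{C}_{c(f)}) = 0 \iff$ "$w$ incident with $f$", so $v$ and $w$ meet exactly the same edges of $G\backslash M$. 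As $G$ is simple, the only edge meeting both $v$ and $w$ is $e$ itself; hence $v$ and $w$ both have degree $1$ in $G\backslash M$ and $\{v, w\}$ is a connected component of $G\backslash M$. This contradicts the assumption that $G\backslash M$ is a connected spanning subgraph, since $n \ge 4 > 2$. Therefore no two vertices share an edge color code, the coloring is edge-locating, and $\chi_L'(G) \le m - kn/2 + k$.

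The only subtle point I anticipate is ensuring that each vertex is genuinely detected by some uniquely colored edge and that the endpoints of such an edge cannot coincide in code — and that is exactly where the hypotheses "$G\backslash M$ connected and spanning" and "$n \ge 4$" are consumed. Once that is secured, the distinguishing step is purely combinatorial: it never requires computing a distance larger than deciding whether a coordinate equals $0$.
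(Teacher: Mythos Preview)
Your proposal is correct and follows the same construction as the paper: collapse each $M_i$ to a single color, give every edge of $G\backslash M$ its own color, and use the hypothesis that $G\backslash M$ is connected and spanning (with $n\ge 4$) to conclude that no two vertices can be incident to the same set of uniquely colored edges. Your contradiction argument spelling out why two equal-coded vertices would force a two-vertex component of $G\backslash M$ is more detailed than the paper's one-line justification, but the idea is identical.
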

\begin{proof}
Let $M=M_1 \cup M_2 \cup \dots \cup M_k$ be $k$ edge-disjoint matchings in $G$.  Let $G \backslash M$ be a connected subgraph. Then, establish an edge coloring $\alpha$ on $G$ by assigning a distinct color
to each matching and assigning distinct colors to all remaining edges of $G \backslash M$. Certainly,
this coloring $\alpha$ is an edge proper coloring of $G$. Since  $G \backslash M$ is a connected spanning subgraph, then there are no two vertices incident to the same set of colors. This means that every vertex has a distinct edge color code. Therefore, $\alpha$ is an edge-locating coloring of $G$.
\end{proof}

As a closing remark, we  raise the following question: Is an edge-locating chromatic number of a graph monotonic? Precisely, is it true that if $G$ is a proper subgraph of $H$, then $\chi_L'(G) \leq \chi_L'(H)$?
We know that the metric dimension of a graph is not monotonic, since if $G$ is a star $K_{1,n-1}$ with $n \geq 5$ and $H$ is a graph formed from $G$ by adding one edge connecting two end-point vertices, then $dim(G) > dim(H)$. The  locating chromatic number of a graph is also not monotonic, since if $G=C_4$ and $H$ is a graph formed
from $C_4$ by adding two pendant edges to two consecutive vertices of $C_4$, then $4=\chi_{L}(G) > \chi_{L}(H)=3$.

\begin{theorem}
The edge-locating chromatic number of a graph is not necessarily  monotonic.
\end{theorem}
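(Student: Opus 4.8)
The plan is to prove the statement in the only way it really can be proved — by producing an explicit pair of graphs $G$ and $H$ with $G$ a proper subgraph of $H$ but $\chi'_L(G)>\chi'_L(H)$. Before choosing the pair, it helps to notice the obstruction that rules out the easy candidates: since a proper edge coloring of $H$ restricts to one of $G$, we have $\chi'(G)\le\chi'(H)$, and the paper already records $\chi'(G)\le\chi'_L(G)$ together with $\Delta(H)\ge\Delta(G)$; moreover, with $t$ colors at most one vertex can have degree $t$ (two of them would both receive the all-zero edge color code, cf.\ the argument for even $K_n$ in Theorem~\ref{evencomplete}). Hence $\chi'_L(H)\ge\chi'(H)\ge\Delta(H)\ge\Delta(G)$, and passing to $H$ can never "help" by thinning out high-degree vertices. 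Therefore the example must be sought among graphs $G$ whose edge-locating chromatic number strictly exceeds $\Delta(G)$ because of the \emph{locating} requirement alone — exactly the situation for small cycles and for trees with many equivalent leaves analysed in Sections~2 and~5 — mirroring the cited non-monotonicity phenomena for metric dimension (a star plus an edge) and for locating coloring ($C_4$ plus two pendant edges).

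Concretely, I would take $G$ to be a graph in which a large collection of vertices (say leaves attached to a common support structure) is forced, under every proper $\chi'(G)$-coloring, to repeat the same pattern of incident color classes, so that those vertices cannot all be separated with $\Delta(G)$ colors; and then I would form $H$ by attaching a short pendant path, or a few pendant edges, at a single vertex of $G$ that is \emph{not} of maximum degree. The added edges create one new color class whose distances to the offending vertices fall into distinct values, so in $H$ these vertices become distinguishable with a smaller palette, while — by the choice of attachment point — the maximum degree and the count of top-degree vertices stay within what a $(\chi'_L(G)-1)$-coloring can accommodate.

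The verification then splits cleanly into two independent, routine parts. For the upper bound $\chi'_L(H)\le\chi'_L(G)-1$, one writes down the explicit proper edge coloring of $H$ just described and checks that all edge color codes are pairwise distinct; this is a finite computation best organised by first reading off, for each vertex, which coordinates are $0$ (the incident color classes), and only afterwards computing the few nonzero distances, which propagate additively along the pendant path. For the lower bound $\chi'_L(G)\ge\chi'_L(G)$ one runs a pigeonhole count on the number of realizable edge color codes (in the spirit of the bounds of Section~2 and the proof of Theorem~\ref{complete}): with $\chi'_L(G)-1$ colors, two of the twin-like vertices are incident to the same collection of color classes and lie at the same distance from the unique class they miss, and hence receive the same code.

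I expect the genuinely hard step to be the conceptual one at the start: finding a $G$ whose surplus $\chi'_L(G)-\Delta(G)$ is purely a metric phenomenon \emph{and} for which the extra edges that cure it in $H$ do not themselves introduce a second vertex of the new maximum degree. Once such a $G$ is pinned down, both the coloring of $H$ and the lower bound for $G$ are mechanical, and the theorem follows.
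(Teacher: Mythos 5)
There is a genuine gap: the theorem is an existence statement, so its entire content is an explicit pair $G\subset H$ with $\chi'_L(G)>\chi'_L(H)$, together with verified values of both invariants --- and your proposal never produces such a pair. You correctly identify the obstruction (adding edges cannot lower $\Delta$ or the number of maximum-degree vertices, so the surplus in $G$ must come from the locating condition alone, not from degree counts), and this matches the spirit of the paper's choice; but you then defer exactly the step you yourself call ``the genuinely hard step,'' namely pinning down a concrete $G$. Describing what an example ``should'' look like, plus a promise that the upper bound is ``a finite computation'' and the lower bound ``a pigeonhole count,'' does not establish that any such $G$ and $H$ exist; note also that your stated lower-bound goal ``$\chi'_L(G)\ge\chi'_L(G)$'' is vacuous as written, and your description of the mechanism in $H$ (the new pendant edges ``create one new color class'' yet the total palette shrinks) is left unreconciled. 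For comparison, the paper completes precisely this missing work: it takes $G$ to be the perfect binary tree $T_3$ with one extra pendant edge at the root $r$, proves $\chi'_L(G)=5$ by a case analysis showing that $4$ colors cannot separate the seven degree-$3$ vertices (a purely locating obstruction, since $\Delta(G)=3$), and then takes $H$ to be $G$ plus one edge joining $r$ to a depth-$3$ vertex, exhibiting an explicit edge-locating $4$-coloring of $H$.

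A secondary concern, had you pushed the plan through, is your specific recipe of forming $H$ by attaching pendant edges or a short path at a non-maximum-degree vertex: this adds new vertices that must themselves be distinguished within the smaller palette, and it is not clear this ever strictly decreases $\chi'_L$ (e.g.\ for the double star $S_{p,p}$, where the surplus over $\chi'$ comes from the two support vertices, attaching a pendant edge to a leaf does not help). The paper instead adds an edge between two existing vertices, which changes the distance structure without creating new vertices to separate. So even granting the heuristics, the construction you sketch would still need a nontrivial idea to be made to work, and as it stands the proposal does not prove the theorem.
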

\begin{proof}
For a positive integer
$n$,
let
$T_n$
denote the perfect binary tree, i.e.,
$T_n$
is a  tree with a root
$r$ of degree $2$ and other vertices of degree $3$ or $1$
in which the distance between the root vertex $r$ and any leaf is
$n$.
Let
$G$
denote the graph obtained from $T_n$ by making adjacent a pendant edge to
$r$ 
in
$T_3$.
We will show that
$\chi_L'(G) \geq 5$.
For a contradiction, since $G$ has at least two vertices of degree $3$, we assume that $\chi_L'(G) = 4$.
Without loss of generality, we may suppose that the incident edges of
$r$
are colored by
$1, 2$,
and
$3$,
such that the leaf is colored by
$1$.
It is clear that $G$ has seven vertices with degree
$3$.
The distance between any vertex of degree
$3$
and any color class is at most $2$.
So, we don't have  more than two vertices of
$G$
with the same colored incident edges. Since $\binom{4}{3}=4$, there are
$4$
ways for coloring the incident edges of a vertex of degree
$3$.
Hence, for exactly one vertex of degree $3$, the edges incident on it take a set of three colors,
and for the rest of the vertices of degree $3$,
for both vertices, the edges incident on them take a set of three colors.
Vertex
$r$
is the only vertex with colored incident edges
$1, 2$,
and
$3$.
Any other vertex with these colored incident edges has distance
$1$
from color
$4$.
We say that the vertices of depth
$i$
in
$G$
are the vertices of
$T_3$
with distance
$i$
from
$r$.
In
$T_3$,
there are two children, as $r_L$ and $r_R$, on the left and right of
$r$.
The children and grandchildren of $r_L$ and $r_R$, called by left part and right part, receptively.
Now, we want to determine the position of two vertices of degree
$3$
with colored incident edges
$2, 3$
and
$4$.
Clearly, these two vertices cannot be in depth $1$ or the same part simultaneously. If these two vertices are in different parts,
then one edge between depth 1 vertex and depth $2$ vertex must be colored by
$1$,
and the other one is not colored by
$1$.
This implies that we have three vertices with colored incident edges
$2, 3$
and
$4$,
which is a contradiction.  Assume that there are two vertices of degree
$3$
with colored incident edges
$2, 3$,
and
$4$
in
depth 1 and depth 2. Similarity, distinguishing these two vertices gives us another vertex with colored incident edges
$2, 3$,
and
$4$,
a contradiction. Therefore, 
$\chi_L'(G) \geq 5$ and obviously, by assigning $5$ colors to the edges of $G$, we can show that $\chi_L'(G)=5$.
Let
$H$
denote the graph obtained from
$G$
with join
$r$
to a pendant vertex in depth $3$. One can check that
$\chi_L'(H)=4$ (see Figure \ref{fig-mon2}).
Therefore, there exist graphs
$G$
and
$H$
that
$G \subset H$
and
$\chi_L'(H) \leq \chi_L'(G)$.
\end{proof}

\begin{figure}[!h]
\begin{center}
\label{FigE2}
  \includegraphics[width=10cm]{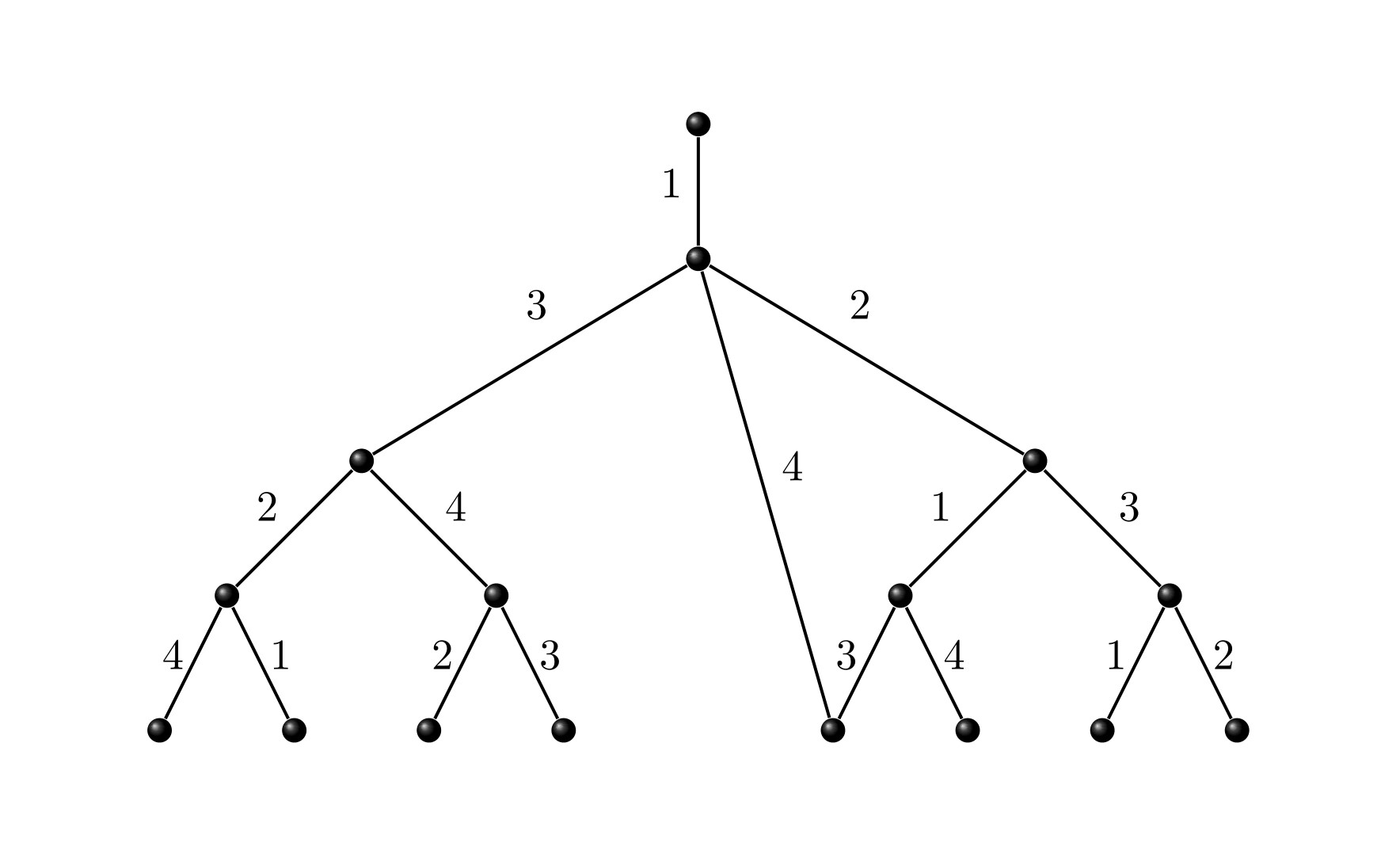}
\end{center}
\caption{\ Graph $H$ with edge-locating chromatic number $4$.}\label{fig-mon2}
\end{figure}

\section{Join graphs}
For any graphs $G$ and $H$, a {\em join graph} between $G$ and $H$, denoted by $G +H$, is a
graph obtained by connecting all vertices of $G$ with all vertices of $H$. In particular, if
$G=K_1$ and $H$ is a cycle $C_n$, the graph $K_1+C_n$ is called a {\em wheel}, and it is denoted by $W_n$.
The graph $K_1+P_n$ is called a {\em fan}, graph and it is denoted by $F_n$.
The graph $K_1+nK_2$ is called a {\em windmill} graph and it is denoted by $Wm(2n)$. The graph $K_2+nK_2$ is called a {\em book} graph with $n$ pages, and it is denoted by $B_{n}$.
In this section,  we will determine the edge-locating chromatic of join graph $G+H$.
\begin{theorem}
For any graphs $G$ and $H$,\\
$\chi_L'(G+H) \geq max\{ \Delta(G)+|V(H)|, |V(G)|+\Delta(H)\}$.
\end{theorem}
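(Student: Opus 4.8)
The plan is to reduce the statement to the elementary fact that any edge-locating coloring is in particular a proper edge coloring, so that $\chi_L'(G+H) \ge \chi'(G+H) \ge \Delta(G+H)$, and then to compute $\Delta(G+H)$.

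First I would record how degrees behave under the join operation. By the definition of $G+H$, every vertex of $G$ is made adjacent to all $|V(H)|$ vertices of $H$ without losing any of its old neighbours, so a vertex $v\in V(G)$ has degree $\deg_G(v)+|V(H)|$ in $G+H$; symmetrically a vertex $w\in V(H)$ has degree $\deg_H(w)+|V(G)|$ in $G+H$. Since $V(G+H)=V(G)\cup V(H)$, taking the maximum over all vertices gives
$$\Delta(G+H)=\max\{\Delta(G)+|V(H)|,\ \Delta(H)+|V(G)|\}.$$

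Next I would invoke that an edge-locating coloring is, by definition, a proper edge coloring of $G+H$, so the number of colours it uses is at least the chromatic index $\chi'(G+H)$; and $\chi'(G+H)\ge\Delta(G+H)$ holds for every graph. Chaining these inequalities with the displayed formula for $\Delta(G+H)$ yields the claimed lower bound on $\chi_L'(G+H)$.

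There is essentially no hard step here: the only point needing a line of justification is that no vertex of $G+H$ can have degree exceeding the two candidate quantities, which is immediate since every vertex lies in exactly one of $V(G)$, $V(H)$ and its $(G+H)$-neighbourhood splits cleanly into its old neighbourhood plus the whole of the other side. If one wanted a bound strictly better than $\Delta(G+H)$, one would have to exploit the locating condition (distinct edge color codes) rather than mere properness; but for the stated inequality, properness of the coloring already suffices, which is why the argument is this short.
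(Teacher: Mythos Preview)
Your proof is correct and follows exactly the same approach as the paper: the paper's proof is the single line ``It is straightforward since $\Delta(G+H)=\max\{\Delta(G)+|V(H)|,\ |V(G)|+\Delta(H)\}$,'' implicitly using $\chi_L'\ge\chi'\ge\Delta$ just as you do. If anything, you have supplied more detail than the paper itself.
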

\begin{proof}
It is straightforward since $\Delta(G+H)=max\{\Delta(G)+|V(H)|, |V(G)|+\Delta(H)\}$.
\end{proof}
The upper bound is sharp and achieved by a wheel, a fan, or a windmill, as stated in the following theorem.

\begin{theorem} The following are the edge-locating chromatic number for special join graphs:
\begin{itemize}
\item For  $n \geq 4$, $\chi_L'(W_n)=n$.
\item For  $n \geq 4$, $\chi_L'(F_n)=n$; $\chi_L'(F_2)=3$, and $\chi_L'(F_3)=4$.
\item For  $n \geq 3$, $\chi_L'(Wm(n))=2n$, $\chi_L'(Wm(2))=5$
\item For  $n \geq 3$, $\chi_L'(B_n)=2n+2$, and $\chi_L'(B_2)=6$.
\end{itemize}
\end{theorem}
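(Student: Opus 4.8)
The lower bounds all come from the previous theorem: $\chi_L'(G+H)\ge\Delta(G+H)$, and one checks $\Delta(W_n)=n$ (the hub has degree $n$, cycle vertices have degree $3$, so for $n\ge 4$ the hub dominates), $\Delta(F_n)=n$ similarly, $\Delta(Wm(n))=2n$ (the central $K_1$ is joined to $2n$ vertices), and $\Delta(B_n)=n+2$ — wait, here one must be careful: $B_n=K_2+nK_2$, so a vertex of the $K_2$ has degree $1+2n$, giving $\Delta(B_n)=2n+1$, which only yields $\chi_L'(B_n)\ge 2n+1$; the claim $2n+2$ needs an extra argument analogous to the even-complete-graph parity obstruction in Theorem~\ref{evencomplete}. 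So for each family I would (i) record the degree-based lower bound, (ii) for the book graph push the lower bound up by one via a counting/parity argument, and (iii) exhibit an explicit edge-locating coloring matching the claimed value, then verify the color codes are pairwise distinct. The small cases $F_2,F_3,Wm(2),B_2$ are handled by hand.

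For the constructions I would proceed family by family. For $W_n=K_1+C_n$ with hub $h$ and cycle $v_1\cdots v_n$: color the $n$ spokes $hv_i$ with the $n$ distinct colors $1,\dots,n$, then color each cycle edge $v_iv_{i+1}$ with a color already used on a spoke but chosen so that (a) properness holds around each $v_i$ and (b) the pair of "missing" colors at $v_i$ — those not appearing on edges incident to $v_i$ — differs from vertex to vertex; since every vertex lies within distance $1$ of every color (the hub realizes all codes as $(0,\dots,0)$ except the spoke color which is $0$, and each $v_i$ sees its incident colors at distance $0$ and all others at distance $1$), it suffices that the sets of incident colors be pairwise distinct, which a rotation-type assignment of the cycle edges achieves. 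The fan $F_n=K_1+P_n$ is the same with one cycle edge removed, which only makes distinguishing easier (the two path-endpoints have smaller incident-color sets). For $Wm(n)=K_1+nK_2$: the center $c$ has degree $2n$, color its $2n$ edges with colors $1,\dots,2n$; each $K_2$ contributes one more edge $x_iy_i$, and one checks that $2n$ colors do not suffice because the two vertices of a single $K_2$ would be forced to have the same incident-color set unless the matching edge gets a fresh color — this is exactly why $\chi_L'(Wm(2))=5>4$, and for $n\ge 3$ a careful reuse of colors on the $x_iy_i$ edges does let $2n$ colors work, so the lower bound $2n$ is attained; I would spell out which color goes on each $x_iy_i$. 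For $B_n=K_2+nK_2$ with central edge $ab$: assign colors to the $2n+1$ edges at $a$ and the $2n+1$ at $b$, then show a parity obstruction forces a $(2n+2)$nd color and give the explicit coloring.

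The main obstacle I expect is the book graph $B_n$: establishing $\chi_L'(B_n)\ge 2n+2$ rather than merely $\ge 2n+1$. The argument should mirror Theorem~\ref{evencomplete}: in any proper edge coloring of $B_n$ with $2n+1$ colors, count how often each color can appear on the $4n+1$ edges, deduce that every vertex of each page must miss a color from a restricted set, and conclude that too few distinct edge color codes are available for the $2n+2$ vertices — pushing the bound to $2n+2$. Once that is in place, the matching construction with $2n+2$ colors is routine: give $ab$ color $1$, the $2n$ edges $ax_i,ay_i$ the colors $2,\dots,2n+1$, the $2n$ edges $bx_i,by_i$ a shifted palette using color $2n+2$, and the $n$ page edges $x_iy_i$ suitable repeats, then verify all $2n+2$ vertices get distinct codes. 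The other families are comparatively mechanical; the only other point needing care is confirming the small exceptional values $\chi_L'(F_2)=3$, $\chi_L'(F_3)=4$, $\chi_L'(Wm(2))=5$, $\chi_L'(B_2)=6$ directly, since there the generic construction degenerates or the generic lower bound is not tight.
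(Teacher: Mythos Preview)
Your overall strategy matches the paper's: degree-based lower bounds plus explicit colorings, with the spokes getting the colors $1,\dots,n$ (or $1,\dots,2n$) and the remaining edges colored by a shift rule. For wheels, fans, and windmills the paper does exactly this, taking $\alpha(cv_i)=i$ and $\alpha(v_iv_{i+1})=i+2$ modulo the appropriate number; your description is equivalent.

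Where you go astray is the book graph lower bound. You flag $\chi_L'(B_n)\ge 2n+2$ as the ``main obstacle'' and propose a parity/counting obstruction in the style of Theorem~\ref{evencomplete}. That is unnecessary. The paper's argument is a single line: $B_n$ has \emph{two} vertices of maximum degree $2n+1$ (namely the two vertices of the central $K_2$). In any proper edge coloring with exactly $2n+1$ colors, each of these vertices is incident to one edge of every color, so both receive the all-zero edge color code and cannot be distinguished. Hence $\chi_L'(B_n)\ge 2n+2$. This same observation handles $B_2$, so that case is not genuinely exceptional for the lower bound either. Your proposed counting argument would presumably also succeed, but it is considerably more work than needed; the key idea you missed is the general principle that two vertices of degree $\Delta$ already force $\chi_L'\ge\Delta+1$.
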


\begin{proof}
For wheels and fans, let $V(W_n)=V(F_n)=\{c,v_1,v_2, \dots, v_n\}$ with a center $c$ and $n \geq 4$.
Since $\Delta(W_n)=\Delta(F_n)=n$ then
$\chi_L'(W_n) \geq n$ and $\chi_L'(F_n) \geq n$.
Now, construct an edge $n$-coloring $\alpha$ of $W_n$ (as well as of $F_n$) as follows.

\[  \alpha(e) = \left\{
\begin{array}{ll}
      i  &  \mbox{, if $e=cv_i$,}   \\
      i+2 \mod n & \mbox{, if $e=v_iv_{i+1}$.} \\
\end{array}
\right. \]
Note that all indices are in mod $n$.
In wheels, the color code of vertex $v_i$ under $\alpha$ will have zero entries in the $i^{th}$, $(i+1)^{th}$, and $(i+2)^{th}$ (in modulo $n$) positions.
In fans, the color code of $v_1$ has zeros in the $1^{st}$ and $3^{rd}$ positions;
the color code of $v_n$ has zeros in the $1^{st}$ and the $n^{th}$ positions. The color codes for other vertices are the same as for wheels. The color code of vertex $c$ has all zero entries. Therefore all color codes are different for wheels as well as on fans. For small cases, it is easy to verify that $\chi_L'(F_2)=3$, and $\chi_L'(F_3)=4$.

In windmills $Wm(n)$,  for $n \geq 3$, let $V(Wm(n))=\{c,v_1,v_2, \dots, v_{2n}\}$ with a center $c$ and
$E(Wm(n))=\{v_iv_{i+1} | \mbox{ for all odd $i \leq 2n$}\} \cup \{cv_i | \mbox{ for all $i \leq 2n$}\}$.
Since $\Delta(Wm(n))=2n$, then $\chi_L'(Wm(n)) \geq 2n$.
Now, construct an edge $(2n)$-coloring $\alpha$ of $Wm(n)$ as follows.

\[  \alpha(e) = \left\{
\begin{array}{ll}
      i  &  \mbox{, if $e=cv_i$,}   \\
      i+2 \mod n & \mbox{, if $e=v_iv_{i+1}$ and $i$ is odd.} \\
\end{array}
\right. \]
Note that all indices are in mod $n$. This coloring $\alpha$ is easily verified as an edge-locating
coloring.

In Books $B_n$, for $n \geq 3$, let $V(B_n)=\{c_1,c_2,v_1, v_2,\cdots, v_{2n-1}, v_{2n} \}$ and
$E(B_n)=\{c_1c_2\} \cup \{c_1v_{2i-1}, c_1v_{2i}, c_2v_{2i-1}, c_2v_{2i} | $ $\mbox{ $ 1\le i \leq n$}\}$
$\cup \{v_{2i-1}v_{2i}| \mbox{  $1\le i \le n$}\}$.
Since $\Delta(B_n)=2n+1$ and there are two vertices of degree $2n+1$, then $\chi_L'(B_n) \geq 2n+2$.
Now, construct an edge $(2n+2)$-locating coloring $\alpha$ of $B_n$ as follows.

\[  \alpha(e) = \left\{
\begin{array}{ll}
      1  &  \mbox{, if $e\in \{c_1c_2, v_{2i-1}v_{2i}|\ 1\le i \le n\}$,}   \\
      i+1 (\mod 2n) & \mbox{, if $e\in \{c_1v_{2i-1}, c_2v_{2i}|\ 1\le i \le n\} $,} \\
      n+2  & \mbox{, if $e=c_1v_{2}$,} \\
      n+2+i (\mod 2n) & \mbox{, if $e\in \{c_1v_{2i+2},c_2v_{2i-1}|\ 1 \le i \le n-1 \}$,} \\
      2n+2 &  \mbox{, if $e=c_2v_{2n-1}$.}\\
\end{array}
\right. \]
It is easy to verify that this coloring $\alpha$ is an edge-locating coloring.
\end{proof}

\begin{theorem}\label{general-join}
Let $G$ be a connected graph and $H=G+K_1$. Then we have

\begin{enumerate}
\item[(i)] If $G$ is graph of order $2n$ and $\Delta(G)\le 2n-2$, then $\chi_L'(H) \leq 2n$. Furthermore, $\chi_L'(H) = 2n+1$ if and only if $G$ has at least one vertex of degree $2n-1$,
\item[(ii)] If $G$ is a graph of order $2n+1$ and $\Delta(G)\le 2n-1$, then $\chi_L'(H) \leq 2n+2$, and equality holds if  $G$ has at least one vertex of degree $2n$
\end{enumerate}

\end{theorem}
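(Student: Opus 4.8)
Throughout write $t=|V(G)|$ and $V(H)=\{c\}\cup\{v_1,\dots,v_t\}$, where $c$ is the vertex of $K_1$; thus $\deg_H(c)=t$ and $\deg_H(v_i)=\deg_G(v_i)+1$, and since $c$ is adjacent to all other vertices, $\mathrm{diam}(H)\le 2$, so every coordinate of every edge color code of $H$ lies in $\{0,1,2\}$. In case (i), $t=2n$ and $\Delta(G)\le 2n-1$ give $\Delta(H)=2n$; in case (ii), $t=2n+1$ and $\Delta(G)\le 2n$ give $\Delta(H)=2n+1$. Hence $\chi'_L(H)\ge\chi'(H)\ge\Delta(H)$ already provides the base lower bounds $2n$ in (i) and $2n+1$ in (ii).

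The main tool is the analysis of colorings using only $\deg_H(c)=t$ colors. In such a coloring the star edges $cv_1,\dots,cv_t$, being pairwise adjacent, receive all $t$ colors; after relabeling colors, $\alpha(cv_i)=i$. Then $c$ meets every color, so each $v_i$, being adjacent to $c$, satisfies $d(v_i,\mathcal C_k)\le 1$ for all $k$; hence $c_\pi(v_i)\in\{0,1\}^{t}$, with $k$-th entry $0$ iff color $k$ is incident to $v_i$. Thus $c_\pi(v_i)$ is the indicator vector of the set $S_i:=\{i\}\cup\{\alpha(v_iv_j):v_iv_j\in E(G)\}$, of size $\deg_G(v_i)+1$, and $c_\pi(v_i)=c_\pi(c)=\mathbf 0$ exactly when $\deg_G(v_i)=t-1$. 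Consequently, if $G$ has a vertex of full degree $t-1$, then no proper $t$-edge-coloring of $H$ is edge-locating, so $\chi'_L(H)\ge t+1$: this yields $\chi'_L(H)\ge 2n+1$ in (i) when $\Delta(G)=2n-1$, and $\chi'_L(H)\ge 2n+2$ in (ii) when $\Delta(G)=2n$ (here $c$ and the full-degree vertex of $G$ are the two vertices forced to share the code $\mathbf 0$).

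For the upper bound of (i) with $\Delta(G)\le 2n-2$: by the reduction above it suffices to exhibit a proper $2n$-edge-coloring of $H$ with the $S_i$ pairwise distinct. A proper $2n$-edge-coloring exists because $H$ has a universal vertex, $\Delta(H)=|V(H)|-1$, and $|E(H)|=|E(G)|+2n\le n(2n-2)+2n=2n^2=\Delta(H)\lfloor|V(H)|/2\rfloor$, so $H$ is not overfull — nor has an overfull subgraph, since an odd subgraph on $2k+1\le 2n-1$ vertices has at most $(2k+1)k\le 2nk$ edges — whence $\chi'(H)=2n$ by the classification of the chromatic index for graphs with a universal vertex; a direct rotational edge coloring of $K_{2n}\supseteq G$, with color labels adapted to $G$ so as to avoid $i$ and $j$ on $v_iv_j$, then restricted to $G$, is an equally good route. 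It then remains to make the $S_i$ distinct: since $i\in S_i$ and $|S_i|=\deg_G(v_i)+1$, a coincidence $S_i=S_j$ can occur only between vertices of equal $G$-degree, and such a coincidence is broken by a local recoloring (a Kempe interchange through $v_i$ on a suitable pair of colors), iterated with a potential-function argument until all coincidences vanish. This gives $\chi'_L(H)\le 2n$, hence $\chi'_L(H)=2n$ when $\Delta(G)\le 2n-2$; combined with the second paragraph this proves the characterization in (i), once one also records the general bound $\chi'_L(H)\le t+1$ (i.e.\ $\le 2n+1$ in (i), $\le 2n+2$ in (ii)), obtained by the same scheme with one spare color while arranging in addition that the two universal vertices of $H$, when both exist, miss distinct colors so that their codes differ. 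Part (ii) runs along the same lines with $t=2n+1$ throughout.

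The step I expect to be the real obstacle is the upper bound of (i): producing, for an arbitrary connected $G$ with $\Delta(G)\le 2n-2$, a proper edge coloring of $G$ from the lists $\{1,\dots,2n\}\setminus\{i,j\}$ — this is exactly where the hypothesis $\Delta(G)\le 2n-2$ is used, since these lists have size $\Delta(G)$ when $\Delta(G)=2n-2$, and as the second paragraph shows no $2n$-coloring can work at all once $\Delta(G)=2n-1$ — together with the requirement that the incident-color sets $S_i$ be pairwise distinct. Establishing this distinctness uniformly in $G$, rather than via the ad hoc cyclic colorings available for each named special graph, is the delicate part of the argument.
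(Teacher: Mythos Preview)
Your lower-bound work and the ``two vertices of maximum degree $t$ force the same all-zero code in any $t$-coloring'' argument coincide with the paper's. The upper bounds, however, are obtained by a completely different route in the paper: it does not build a coloring of $H$ directly. Instead it asserts that $\Delta(G)\le 2n-2$ gives $G\subseteq K_{2n}\setminus M_n$, hence $H\subseteq K_{2n+1}\setminus M_n$, and then simply quotes the earlier computations $\chi_L'(K_{2n+1}\setminus M_n)=2n$ and $\chi_L'(K_{2n+1}\setminus M_k)=2n+1$ for $k\le n-1$ (Theorems~\ref{KnminusMatchingodd} and~\ref{KnminusMatchingeven}) to read off the bounds; part (ii) is the same with $K_{2n+2}$ in place of $K_{2n+1}$. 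There is no chromatic-index analysis, no overfull discussion, and no Kempe-chain argument.

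You are right that the obstacle in your approach is the step ``make the incident-color sets $S_i$ pairwise distinct'': the phrase ``iterated Kempe interchange with a potential function'' is a plan, not a proof, and without it the upper bound in (i) is not established. It is worth recording, though, that the paper's shortcut is not airtight either. First, $\Delta(G)\le 2n-2$ only yields $\delta(\overline G)\ge 1$, which does \emph{not} guarantee a perfect matching in $\overline G$ (take $\overline G=K_{1,3}\cup K_2$ on six vertices), so the inclusion $G\subseteq K_{2n}\setminus M_n$ can fail. Second, even when the inclusion holds, the inference ``$H\subseteq H'$ hence $\chi_L'(H)\le\chi_L'(H')$'' is exactly the monotonicity the paper itself disproves in the next theorem; restricting the explicit edge-locating coloring of $K_{2n+1}\setminus M_n$ to $H$ keeps all $2n$ colors on the star at $c$, but there is no reason the shrunken sets $S_i^H\subsetneq S_i^{K}$ remain pairwise distinct. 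So neither argument, as written, fully closes the upper bound in~(i); the paper hides the difficulty behind a subgraph citation, whereas you isolate it explicitly as a list-edge-coloring-with-distinct-palettes problem on $G$.
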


\begin{proof}
\indent (i). Let $|V(G)|=2n$. If $\Delta(G)\le 2n-2$, then $G\subseteq K_{2n}\setminus M_n$. From Theorem \ref{KnminusMatchingeven}
$\chi_L'(K_{2n}\setminus M_n) = 2n$ and then $\chi_L'(G) \leq 2n$. Now we have, $H\subseteq K_{2n+1}\setminus M_n$ and from
Theorem \ref{KnminusMatchingodd} $\chi_L'(K_{2n+1}\setminus M_n) = 2n$ and then $\chi_L'(H) \leq 2n$.\\
Now suppose that  $G$ has at least one vertex of degree $2n-1$. Then $H$ has at least two vertices of degree $2n$, and hence $\chi_L'(H) \geq 2n+1$.
On the other hand, $H\subseteq K_{2n+1} \setminus M_{k}$ for $k\le n-1$. By Theorem \ref{KnminusMatchingodd}, $\chi_L'(K_{2n+1}\setminus M_k) = 2n+1$,
and thus  $\chi_L'(H) \leq 2n+1$. Therefore, the equality holds.\\
Conversely, suppose that the equality holds and, in contradiction, $G$ has no vertex of degree $2n-1$, which means that $\Delta(G)\le  2n-2$. From
the first part of the proof, since the order of $G$ is $2n$, hence $\chi_L'(H) \leq 2n$, that is a contradiction.

(ii). Let $|V(G)|=2n+1$. If $\Delta(G)\le 2n-1$, then $G\subseteq K_{2n+1}\setminus M_n\cup M_k$ where $k\ge 1$. From Theorem \ref{KnminusMatchingodd}, 
$\chi_L'(K_{2n+1}\setminus M_n\cup M_k) \le 2n$, and then $\chi_L'(G) \leq 2n$. In this case, $H+K_1$ is a connected graph of order $2n+2$, with exactly one vertex of maximum degree $\Delta(H)=2n+1$.  Thus we have $H\subseteq K_{2n+2}\setminus M_n\cup M_k$ and from
Theorem \ref{KnminusMatchingeven} $\chi_L'(K_{2n+2}\setminus M_n\cup M_k) \le 2n+2$ and then $\chi_L'(H) \leq 2n+2$.\\
Now suppose that  $G$ has at least one vertex of degree $2n$. Then $H$ has at least two vertices of degree $2n+1$ and hence $\chi_L'(H) \geq 2n+2$.
On the other hand, $H\subseteq K_{2n+2} \setminus M_{n}$. By Theorem \ref{KnminusMatchingeven} $\chi_L'(K_{2n+2}\setminus M_n) \le 2n+2$,
and thus  $\chi_L'(H) \leq 2n+2$. Therefore, the equality holds.\\
\end{proof}

\section{Trees}

\begin{theorem}\label{the-dsta}
For any double star $S_{p,q}$,
$\chi'_L(S_{p,q})=\begin{cases}
p+1, & \text{if}\; p>q
\\
p+2, & \text{if}\; p=q.
\end{cases}$.
\end{theorem}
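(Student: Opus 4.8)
Throughout write $p \ge q$, and realize $S_{p,q}$ with centers $u$ (adjacent to leaves $a_1,\dots,a_p$) and $v$ (adjacent to leaves $b_1,\dots,b_q$) joined by the central edge $uv$. Since $S_{p,q}$ is a tree we have $\chi'(S_{p,q}) = \Delta(S_{p,q}) = p+1$, so in every case $\chi'_L(S_{p,q}) \ge p+1$. To boost this to $p+2$ when $p=q$, I would argue by contradiction: an edge-locating coloring with exactly $p+1$ colours would, since $\deg u = \deg v = p+1$ and the colouring is proper, make both $u$ and $v$ incident with edges of all $p+1$ colours; hence $c_\pi(u) = c_\pi(v) = (0,\dots,0)$, contradicting the locating property. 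So $\chi'_L(S_{p,q}) \ge p+2$ when $p=q$.

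For the matching upper bounds I would exhibit explicit colourings and read off all color codes, the key structural fact being that $\operatorname{diam}(S_{p,q}) = 3$, so every coordinate of every code lies in $\{0,1,2\}$, and that a coordinate of $c_\pi(w)$ equals $0$ exactly when $w$ is incident to an edge of that colour. For $p>q$, colour the star at $u$ bijectively with $1,\dots,p$ (so $\alpha(ua_i)=i$), set $\alpha(uv)=p+1$, and colour $vb_1,\dots,vb_q$ with any $q$ distinct colours out of $\{1,\dots,p\}$. One checks: $c_\pi(u)$ is all-zero; $c_\pi(v)$ is a $0/1$ vector whose $1$'s sit in the (nonempty, since $p>q$) set of colours in $\{1,\dots,p\}$ absent at $v$; $c_\pi(a_i)$ is the all-ones vector with a single $0$ in position $i$; and $c_\pi(b_j)$ has a $0$ in the position of $\alpha(vb_j)$, a $2$ in every position of a colour used only at $u$ (at least one such position exists), and $1$'s elsewhere. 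These are pairwise distinct: the $a_i$'s and the $b_j$'s are separated within their families by the location of their unique $0$, while the $b_j$-codes are separated from all of $u,v,a_i$ by containing a $2$, which none of the others do. Hence $\chi'_L(S_{p,q}) \le p+1$.

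For $p=q$ I would use $p+2$ colours, placing the extra colour at a single leaf of $v$: take $\alpha(ua_i)=i$, $\alpha(uv)=p+1$, $\alpha(vb_j)=j+1$ for $1\le j\le p-1$, and $\alpha(vb_p)=p+2$. This is a proper edge colouring, and computing the codes shows that $c_\pi(u)$ has its lone nonzero entry ($=1$) in coordinate $p+2$, $c_\pi(v)$ has its lone nonzero entry ($=1$) in coordinate $1$, each $c_\pi(a_i)$ is almost all $1$'s with a $0$ in coordinate $i$ and a $2$ in coordinate $p+2$, each $c_\pi(b_j)$ with $j<p$ has a $2$ in coordinate $1$, a $0$ in coordinate $j+1$, and $1$'s elsewhere, and $c_\pi(b_p)$ has a $2$ in coordinate $1$ and a $0$ in coordinate $p+2$. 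Running through the pairs, using the location of the $2$ to peel off $v$ and the $b$'s, coordinate $p+2$ to separate $u$, the $a$'s, and $b_p$ from the rest, and the location of the $0$ within each family, shows all $2p+2$ codes are distinct, so $\chi'_L(S_{p,q}) \le p+2$. The two inequalities match the lower bounds, finishing the proof. (As a sanity check the formula is consistent with known cases: $p=q=1$ gives $S_{1,1}=P_4$ with $\chi'_L=3$, and letting $q=0$ degenerates to $K_{1,p+1}$ with $\chi'_L=p+1$.)

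The genuinely delicate step, and the one I expect to be the main obstacle, is the bookkeeping in the $p=q$ case: because the extra colour is placed freely, one must be sure it does not produce two equal leaf codes at $v$, or a leaf code at $v$ equal to a leaf code at $u$; organizing the verification around which coordinate carries the value $2$ and which carries the $0$ is what keeps this under control.
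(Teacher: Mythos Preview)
Your proof is correct and follows essentially the same approach as the paper: the lower bounds come from $\chi'=\Delta=p+1$ and, when $p=q$, from the observation that both centers would receive the all-zero code; the upper-bound colourings you give are (after swapping the names of the two centers) literally the same as the paper's, and your verification of distinct codes is in fact more thorough than the paper's. One cosmetic slip: in your $p=q$ narrative you say the location of the $2$ ``peels off $v$ and the $b$'s'', but $v$ has no coordinate equal to $2$; the separation of $u$ and $v$ from the leaves comes instead from the number of zero coordinates, while the $a$'s versus the $b$'s are indeed separated by whether the $2$ sits in coordinate $p+2$ or coordinate $1$.
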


\begin{proof}
Let $G=S_{p,q}$, where $p>q$, with support vertices $v,u$ of degrees $p+1$, $q+1$  and end vertices $v_1,\ldots, v_p, u_1,\ldots, u_q$ respectively.
Then, by K\"{o}nig’s Theorem \cite[Theorem 10.8]{book}, $\chi'(G) = p+1$ and hence
$\chi'_L(S_{p,q})\ge p+1$. On the other hand, if we assign color $i$ to $vv_i$ and $uu_i$, and assign color $p+1$ to the vertex $vu$, then
$c_{\pi}(v_i)=(1,1,\ldots, d(v_i, C_i)=0,1,\ldots,1)$ for $1\le i\le p$, $c_{\pi}(v)=(0,0,\ldots, 0)$,
$c_{\pi}(u)=(0,0,\ldots, 0, d(u, C_{q+1})=1, \ldots, d(u, C_{p})=1, d(u, C_{p+1})=0)$, and $c_{\pi}(u_j)=(1,1,\ldots, d(u_j, C_j)=0, 1,\ldots,d(u_j, C_{q})=1, d(u_j, C_{q+1})=2, \ldots, d(u_j, C_{p})=2, d(u_j, C_{p+1})=1)$ for $1\le j\le q$. Therefore $\chi'_L(S_{p,q})= p+1$.

Let $p=q$. Then $\chi'(S_{p,q}) = p+1$, and edges color $i$ for $vv_i$ and $uu_i$ $1\le i \le p$ and color $p+1$ for $vu$. In this case, 
$c_{\pi}(v)=(0,0,\ldots, 0)=c_{\pi}(u)$. Now by changing the color edge $uu_1$ from $1$ to  $p+2$. Then using above method, it can be seen that
all vertices have distinct edge color codes. Therefore, $\chi'_L(S_{p,q})= p+2$.
\end{proof}

In general we have,
\begin{theorem}\label{5}
 Let $n\ge 4$. There exists a tree $T$ of size $m$ having edge-locating-chromatic
number $k$ if and only if $k \in \{3, 4, \ldots, m-1, m\}$.
\end{theorem}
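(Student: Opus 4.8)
The plan is to prove the two directions separately; necessity is immediate from results already established, and sufficiency requires exhibiting, for every admissible value $k$, a tree of size $m$ with that edge-locating chromatic number. For necessity, if $T$ is a tree of size $m\ge 4$ and $k=\chi'_L(T)$, then $k\le m$ by the general bound $\chi'_L(G)\le m$, while $\chi'(T)\le\chi'_L(T)$ together with the earlier fact that $\chi'_L(G)=2$ forces $G\cong P_3$ (which has size $2$) gives $k\ge 3$; hence $k\in\{3,4,\dots,m\}$.

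For sufficiency, I would realize each $k\in\{3,\dots,m\}$ by a ``broom'': let $T_k$ be obtained from the star $K_{1,k-1}$ with center $c$ and leaves $u_1,\dots,u_{k-1}$ by attaching at $c$ a path $c=w_0,w_1,\dots,w_\ell$ with $\ell=m-k+1\ge 1$ edges. Then $T_k$ is a tree of size $(k-1)+\ell=m$ with $\Delta(T_k)=\deg(c)=k$, so $\chi'_L(T_k)\ge\chi'(T_k)\ge k$; for $k=m$ we have $\ell=1$ and $T_k=S_m$, whose edge-locating chromatic number is already known to be $m$. For $\ell\ge 2$ I would then check that the following proper $k$-coloring is edge-locating: give $cu_i$ the color $i$ for $1\le i\le k-1$, give $cw_1$ the color $k$, and color the path edges $w_1w_2,w_2w_3,\dots,w_{\ell-1}w_\ell$ alternately $1,2,1,2,\dots$. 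Indeed $c$ gets the all-zero code, each $u_i$ gets $0$ in coordinate $i$ and $1$ elsewhere, $w_1$ is the unique vertex whose zero coordinates are exactly $\{1,k\}$, and for $2\le t\le\ell$ the $k$-th coordinate of $w_t$ equals $d(w_t,\mathcal{C}_k)=t-1$, a value strictly increasing in $t$ and never $0$; so all vertices receive distinct edge color codes and $\chi'_L(T_k)=k$.

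The main obstacle is the distinctness check in that last step: one must verify that the ``deep'' path vertices $w_t$ are never confused with the star leaves $u_i$ (for $k\ge 4$ they differ because each $w_t$ carries a coordinate equal to $d(w_t,c)\ge 2$, while for $k=3$ the short codes are compared directly), and the boundary cases $\ell=1$ (where $T_k=S_m$, handled by the earlier theorem) and $\ell=2$ (where $w_\ell=w_2$ is a leaf with an atypical code) must be treated separately. Once one records that every vertex's edge color code is determined by the set of color classes incident to it together with its distance to the single class $\mathcal{C}_k$, this case analysis becomes routine.
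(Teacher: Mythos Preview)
Your proof is correct and takes essentially the same route as the paper: both construct a ``broom'' (a star $K_{1,k-1}$ with a pendant path attached at the center) and exhibit an explicit $k$-coloring that is edge-locating, the lower bound coming from $\Delta=k$. The only cosmetic differences are that the paper handles $k=3$ separately via the path $P_{m+1}$ (citing Proposition~\ref{prop 1}) rather than the broom, and that you spell out the necessity direction explicitly while the paper leaves it implicit.
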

\begin{proof}
For $k=3$,  consider $T=P_{m+1}$ by Theorem \ref{prop 1}. For $k\ge 4$, let $T$ be a tree with vertex set $\{v_1, v_2,\ldots, v_{m+1}\}$ where vertex $v_2$ is of degree $k$, vertices $v_1, v_3, v_4, \ldots, v_{k}, v_{m+1}$ of degree $1$, and other vertices are of degree $2$. Now if we assign $i$ to edge $v_2v_i$, ($1\le i\ne 2 \le k+1$), assign  $2$ and $1$ to other edges alternately, then for this $T$, it is obvious to see that $\chi'_L(T)=k$.
\end{proof}

\begin{theorem}
Let $T$ be a tree with $k$ support vertices $v_1,v_2,\cdots, v_k$, and $\ell_i$  leaves adjacent to $v_i$, where $\ell_1\le \ell_2 \le \cdots \le \ell_k$. Let $e_{i,j}$ be the pendant edges corresponding to the support vertex $v_i$ where $1\le j\le \ell_i$ and $T'$
be the induced subgraph of non-leaves of $T$.
If $\Delta(T) <m =\sum_{i=1}^{k}\ell_i$. Then $\chi'_{L}(T)\leq \Delta(T')+\ell_k+k-1$. Equality holds if and only if $T=S_{p,p}$.
\end{theorem}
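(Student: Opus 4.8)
The plan is to prove the upper bound by an explicit colouring, read off the $S_{p,p}$ case from Theorem~\ref{the-dsta}, and obtain the converse by a colour-saving modification of the same colouring. First I would record what the hypothesis $\Delta(T)<m=\sum_{i=1}^{k}\ell_i$ forces: $T$ is not a star (a star on $m$ leaves has $\Delta=m$), so $k\ge 2$; $T'$ has no isolated vertex; and every leaf of $T'$ is a support vertex of $T$ (a leaf of $T'$ has $T'$-degree $1$, so being a non-leaf of $T$ it must carry a pendant edge). Also $\deg_T(w)\ge 2$ for every $w\in V(T')$, and, $T'$ being a tree, K\"{o}nig's theorem gives $\chi'(T')=\Delta(T')$.

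For the upper bound I would properly colour $E(T')$ with $\{1,\dots,\Delta(T')\}$, reserve a common pool $P=\{\Delta(T')+1,\dots,\Delta(T')+\ell_k\}$ of $\ell_k$ colours and private colours $q_i=\Delta(T')+\ell_k+i$ for $1\le i\le k-1$, and colour the pendant edges by: giving the $\ell_k$ edges $e_{k,1},\dots,e_{k,\ell_k}$ at $v_k$ the $\ell_k$ colours of $P$, and, at each $v_i$ with $i<k$, giving $e_{i,1}$ the colour $q_i$ and $e_{i,2},\dots,e_{i,\ell_i}$ the first $\ell_i-1$ colours of $P$ (admissible since $\ell_i-1<\ell_i\le\ell_k=|P|$). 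This uses at most $\Delta(T')+\ell_k+k-1$ colours and is proper. To see it locates: every edge coloured with a colour $\le\Delta(T')$ lies in the induced subtree $T'$, so for $w\in V(T')$ those coordinates of $c_{\pi}(w)$ are distances measured inside $T'$; moreover $q_i$ colours only $e_{i,1}$ and the colour $\Delta(T')+\ell_k$ colours only one pendant edge of $v_k$, so $c_{\pi}(w)$ determines the vector $\big(d_{T'}(w,v_1),\dots,d_{T'}(w,v_k)\big)$, and since the leaves of $T'$ form a resolving set of $T'$ and are among $v_1,\dots,v_k$, vertices of $T'$ get pairwise different codes. A leaf of $T$ has exactly one zero coordinate (at the colour of its unique edge) whereas a vertex of $V(T')$ has at least two, so leaves are separated from $V(T')$; two leaves at the same $v_i$ differ at the positions of the colours of their incident edges; and for two leaves at $v_i\ne v_j$ whose incident colours coincide (necessarily a colour of $P$), the coordinate at $q_i$ (if $i<k$) or at $\Delta(T')+\ell_k$ (if $i=k$) equals $1$ at the leaf at $v_i$ but $1+d_{T'}(v_i,v_j)\ge 2$ at the other. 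Hence $\chi'_L(T)\le\Delta(T')+\ell_k+k-1$.

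For equality: when $T=S_{p,p}$ the hypothesis forces $p\ge 2$, and there $T'=K_2$, $\Delta(T')=1$, $k=2$, $\ell_k=p$, so the bound is $p+2=\chi'_L(S_{p,p})$ by Theorem~\ref{the-dsta}. Conversely, assuming the hypothesis and $T\ne S_{p,p}$ for all $p$, I would exhibit a colouring with $\Delta(T')+\ell_k+k-2$ colours. Since $T=S_{p,p}$ exactly when $T'=K_2$ and $\ell_1=\ell_2$, either $T'=K_2$ with $\ell_1\ne\ell_2$ — then $T$ is a double star $S_{p,q}$ with $p>q$ and Theorem~\ref{the-dsta} gives $\chi'_L(T)=p+1$ — or $|V(T')|\ge 3$, so $\Delta(T')\ge 2$. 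In the latter case I would save one colour in the construction above: since the leaves of $T'$ already resolve $T'$, some support vertex $v_{i_0}$ among $v_1,\dots,v_{k-1}$ is redundant for resolving (an interior vertex of $T'$, or one of two leaves of $T'$ at a common vertex, or, when $T'$ is a path, one of its endpoints — one of these always exists), so I would delete its private colour $q_{i_0}$ and colour its pendant edges from $P$, bringing the count to $\Delta(T')+\ell_k+k-2$. Using $\Delta(T')\ge 2$ and $k\ge 2$ one checks that with this palette no vertex is incident to all colours, and, provided $\ell_{i_0}<\ell_k$, that the surviving markers still recover a resolving set of $T'$-distances, so the colouring still locates.

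The step I expect to be the main obstacle is the ``symmetric'' subcase, in which the only deletable marker sits on a support vertex $v_{i_0}$ that carries exactly $\ell_k$ pendant edges, like $v_k$ (for instance $T'$ a path, or a spider two of whose extreme support vertices carry equally many leaves). Deleting $q_{i_0}$ then makes $v_{i_0}$ reuse the colour $\Delta(T')+\ell_k$ that was $v_k$'s private marker, so that marker no longer pins down $d_{T'}(\cdot,v_k)$. The remedy is to change strategy: spend one extra colour inside $T'$ so that $v_{i_0}$ and $v_k$ receive different $T'$-colours (hence are separated by their incident-colour sets rather than by a leaf-marker), and recycle the freed $T'$-colours on pendant edges, keeping the total at $\Delta(T')+\ell_k+k-2$. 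The technical heart is then the verification that all colour codes remain pairwise distinct — in particular for the leaves at $v_{i_0}$ and at $v_k$, and for the interior vertices of $T'$ — together with choosing the recycled colours so that $v_{i_0}$ and $v_k$ omit different colours of the palette. This bookkeeping, and keeping the running colour count below the bound, is the delicate part, and it is also where the hypothesis $\Delta(T)<m$ is genuinely used beyond the upper bound.
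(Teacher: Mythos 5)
Your upper bound and the ``if'' half of the equality claim are correct, and the colouring you use is essentially the paper's: a proper $\Delta(T')$-colouring of $T'$, a pool of $\ell_k$ colours carried by the pendant edges at $v_k$ and partially reused at the other support vertices, and one private colour per support vertex $v_i$ with $i<k$; your verification (the private markers $q_i$ and the unique colour $\Delta(T')+\ell_k$ recover the $T'$-distances to the support vertices, the leaves of $T'$ are support vertices and resolve $T'$, leaves of $T$ are separated by their single zero coordinate) is in fact more explicit than the paper's one-line justification, and reading off the $S_{p,p}$ value from Theorem~\ref{the-dsta} is fine.

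The genuine gap is in the ``only if'' direction. To conclude that equality forces $T=S_{p,p}$ you must exhibit, for every admissible tree with $|V(T')|\ge 3$, an edge-locating colouring with $\Delta(T')+\ell_k+k-2$ colours. Your colour-saving step (delete the private colour of some $v_{i_0}$, $i_0<k$, and recolour its pendant edges from the pool) only works when $\ell_{i_0}<\ell_k$, i.e.\ when $\ell_1<\ell_k$, since otherwise $v_{i_0}$ must use the colour $\Delta(T')+\ell_k$ and the marker identifying $v_k$ is lost. For the remaining symmetric case $\ell_1=\cdots=\ell_k$ you only describe a remedy --- spend an extra colour inside $T'$ and ``recycle'' freed colours --- without specifying the colouring, without checking the budget (as stated, $\Delta(T')+1$ colours on $T'$ plus the pool plus $k-2$ private colours is $\Delta(T')+\ell_k+k-1$, i.e.\ no saving), and without verifying that the codes of the leaves at $v_{i_0}$ and at $v_k$, and of the interior vertices of $T'$, stay distinct once the $v_k$-marker is shared; you yourself call this ``the technical heart'' and leave it undone. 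So the converse is asserted rather than proved. The paper closes this direction with the same kind of colour-saving move (recolouring the pendant edges of a second support vertex with exactly the colours used at $v_k$, splitting on whether $v_k$ is a leaf of $T'$), so your plan is compatible with the paper's argument, but as written it stops precisely at the case that carries the content of the equality characterization.
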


\begin{proof}
We can consider an edge proper $\Delta(T')$-coloring of $T'$, with colors $1, 2, \ldots, \Delta(T')$. Also, color the $\ell_k$ pendant edges with distinct colors $\Delta(T')+1, \Delta(T')+2, \ldots, \Delta(T')+\ell_k$.
Now assign colors $\Delta(T')+1, \Delta(T')+2, \ldots, \Delta(T')+\ell_{i}-1,
\Delta(T')+\ell_{k}+i$ to the edges $e_{i,1}, \cdots,  e_{i,\ell_i}$ if $\ell_i \geq 2$ or assign
color $\Delta(T')+\ell_k+i$ to edge $e_{i,\ell_i}$ if $\ell_i=1$.
Now, let $v$ and $u$ be two arbitrary vertices of $T$. Let $P_{v-u}$ denote the path between $v$ and $u$, and $P$ be a maximal path that contains
$P_{v-u}$. There exist two leaves $e$ and $e'$ on $P$ such that the colors of $e$ and $e'$ are distinct and distinguish vertices $v$ and $u$.

For equality, if $T=S_{p,p}$ ($p\ge 2$) Theorem \ref{the-dsta} deduces the result.\\
Conversely, let $\chi'_{L}(T)= \Delta(T')+\ell_k+k-1$ and $T\ne S_{p,p}$. If $T=S_{p,q}$, where $p\ge q+1$, then Theorem \ref{the-dsta} shows that  $\chi'_{L}(T)=p+1\ne 1+p+1=p+2$, a contradiction.
Hence $T$ has at least $k\ge 3$ support vertices, $\Delta(T')\ge 2$ and $T'$ has at least two leaves, say $v_r, v_t$, and one non leaf, say $v_i$. Suppose $v_k$ is not a leaf in $T'$, since $\ell_r\le \ell_k$, then the pendant edges $e_{r,j}$s  corresponding to $v_r$ can be
colored with the colors of the pendant edges $e_{k,j}$s  corresponding to $v_k$. Suppose $v_k$ is a leaf in $T'$, then the pendant edges $e_{i,j}$s
corresponding to $v_i$ can be colored with the colors of the pendant edges $e_{k,j}$s  corresponding to $v_k$. In the two above cases, other pendant
edges corresponding to other support vertices can be colored by the method in the first part of the theorem. Therefore $\chi'_{L}(T)\le \Delta(T')+\ell_k+k-2$. This  contradiction presents $T=S_{p,p}$.
\end{proof}
\begin{theorem}

Let
$T$
be a tree with
$m\geq 3$
leaves.
If
$\Delta(T)= m$
then
$\chi'_{L}(T)=m$.
If
$\Delta(T)< m$
then
$\chi'_{L}(T)\leq \Delta(T')+m$,
where
$T'$
is the induced subgraph of non-pendant vertices of
$T$.
\end{theorem}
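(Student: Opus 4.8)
The plan is to split on whether $\Delta(T)$ equals the number of leaves $m$ or is strictly smaller. The case $\Delta(T)<m$ is immediate from the preceding theorem: if $v_1,\dots,v_k$ are the support vertices of $T$ with $\ell_1\le\cdots\le\ell_k$ pendant edges respectively, then $\sum_{i=1}^k\ell_i=m$ and each $\ell_i\ge 1$, so $\ell_k+k-1=\ell_k+\sum_{i=1}^{k-1}1\le \ell_k+\sum_{i=1}^{k-1}\ell_i=m$; hence $\chi'_L(T)\le \Delta(T')+\ell_k+k-1\le \Delta(T')+m$, which is exactly the claimed bound.

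Now suppose $\Delta(T)=m$. First I would observe that $T$ must be a \emph{spider}: since $T$ is a tree, $\sum_v(\deg v-2)=-2$, the $m$ leaves contribute $-m$, degree-$2$ vertices contribute $0$, and a single vertex $v_0$ of degree $m$ already contributes $m-2$, so there is no further vertex of degree $\ge 3$. Thus $T$ is $v_0$ together with $m$ pendant paths (legs), the $i$-th being $v_0=u_{i,0},u_{i,1},\dots,u_{i,L_i}$ with edges $e_{i,t}=u_{i,t-1}u_{i,t}$. The lower bound $\chi'_L(T)\ge m$ is clear because the $m$ edges incident with $v_0$ need distinct colors. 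For the upper bound I would use the $m$-coloring $\alpha$ that assigns color $i$ to $e_{i,t}$ when $t$ is odd and color $i+1$ (with color $m+1$ read as color $1$) when $t$ is even; it is routine to check $\alpha$ is a proper edge coloring, since at $v_0$ the edges $e_{i,1}$ receive the distinct colors $1,\dots,m$ and consecutive edges of a leg receive the distinct colors $i$ and $i+1$.

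The core of the proof is then a direct computation of the edge color codes under $\alpha$: the center $v_0$ gets the all-zero code; an interior vertex $u_{i,t}$ with $1\le t\le L_i-1$ has zeros exactly in coordinates $i$ and $i+1$ and the value $t$ in every other coordinate; the leaf $u_{i,L_i}$ of a leg with $L_i\ge 2$ has a $0$ in coordinate $\alpha(e_{i,L_i})$, a $1$ in the other coordinate of $\{i,i+1\}$, and the value $L_i$ in every other coordinate; and the leaf $u_{i,1}$ of a leg of length $1$ has a $0$ in coordinate $i$ and a $1$ elsewhere. Granting these formulas, distinctness is clean: the number of zero coordinates is $m$, $2$ or $1$ according as the vertex is $v_0$, interior, or a leaf, and these are pairwise distinct since $m\ge 3$; among interior vertices the two zero coordinates form the set $\{i,i+1\}$, which determines $i$ (again using $m\ge 3$), and the repeated nonzero value is $t$; among leaves, a coordinate $\ge 2$ occurs precisely for leaves of legs of length $\ge 2$, and in that case $\{\text{zero coordinate},\ \text{the unique coordinate equal to }1\}=\{i,i+1\}$ recovers $i$ while the repeated value is $L_i$, whereas for a leaf of a length-$1$ leg the zero coordinate is itself $i$. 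Hence $\alpha$ is an edge-locating coloring, so $\chi'_L(T)\le m$ and therefore $\chi'_L(T)=m$.

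The main obstacle I expect is the verification of the color-code formulas, namely showing that for any color $j\notin\{i,i+1\}$ the closest $j$-colored edge to a vertex of leg $i$ is the central edge $e_{j,1}$ (so that coordinate equals the vertex's distance to $v_0$), and handling the bookkeeping for short legs $L_i\in\{1,2\}$; this is careful but not deep. Everything else — the reduction to the spider case, the lower bound, and the final distinctness count — is short.
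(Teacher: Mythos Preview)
Your proposal is correct and, for the main case $\Delta(T)=m$, uses exactly the paper's construction: the same alternating $\{i,i+1\}$ coloring along each leg of the spider, with the same distinctness argument (indeed, your explicit computation of codes and your preliminary observation that $T$ must be a spider make the verification cleaner than the paper's somewhat sketchy version). The only genuine difference is in the case $\Delta(T)<m$: the paper reproves the bound directly by giving each pendant edge its own new color and using the two pendant edges at the ends of a maximal path through $u$ and $v$ to distinguish them, whereas you simply invoke the preceding theorem and the elementary inequality $\ell_k+k-1\le\sum_i\ell_i=m$. Your route is shorter and perfectly legitimate; the paper's direct argument has the minor advantage of being self-contained.
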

\begin{proof}
Assume first that
$\Delta(T)= m$.
Let
${N}(v)=\{w_1, w_2, \ldots, w_m\}$,
for a vertex
$v$
of
$T$.
Let
$v_1 u_1, v_2 u_2, \ldots, v_m u_m$
be the leaves of
$T$
such that
$v_i$'s
are pendant vertices, for
$1\leq i \leq m$.
For any
$i$, $1\leq i \leq m$,
suppose that
$P_i$
is the
$v-v_i$
path that contains vertex
$w_i$, $1\leq i \leq m$.
Since $\Delta(T)= m$,
$V(P_i) \cap V(P_j) = \{v\}$, for any
$i$ and $j$, $1\leq i, j \leq m$.
Consider a coloring of
$T$
in such a way that for any
$i$, $1\leq i \leq m-1$,
the edges of
$P_i$ ($P_m$)
are colored by colors
$i$ ($m$)
and
$i+1$ ($1$),
alternately, such that edges
$vw_i$
are colored by
$i$,
for
$1\leq i \leq m$.
Any non-pendant vertex of
$P_i$ ($P_m$)
has a distance zero from
$\mathcal{C}_i$ ($\mathcal{C}_m$)
and
$\mathcal{C}_{i+1}$ ($\mathcal{C}_1$),
and distance more than zero from other colors.
Hence, each non-pendant vertex of
$T$
is distinguished by other vertices. On the other hand,
$v_i$ ($v_m$)
has a distance zero from one of the color classes
$\mathcal{C}_i$ ($\mathcal{C}_m$)
and
$\mathcal{C}_{i+1}$ ($\mathcal{C}_{1}$),
and distance one from another class, for any
$i$,
$1\leq i \leq m$,
that
$|V(P_i)| \geq 3$.
There exist only some elements of
${N}(v) \setminus V(P_i)$ (${N}(v) \setminus V(P_m)$)
that can have the same coordinates according to the color classes
$\mathcal{C}_i$ ($\mathcal{C}_m$)
and
$\mathcal{C}_{i+1}$ ($\mathcal{C}_{1}$).
Let
$z \in {N}(v) \cap V(P_i)$.
If
$|V(P_i)| \geq 3$,
then degree
$z$
is
$2$
and the result is obtained. If
$z$
is a pendant vertex, then since
$m\geq 3$,
there exists a color class
$\mathcal{C}_{j}$
such that the distance of
$z$
from
$\mathcal{C}_{j}$
is one and the distance of
$v_i$
from
$\mathcal{C}_{j}$
is more than one. Therefore, all vertices of
$T$
have a different edge color code, and the result is available.

For the other implication, by \cite[Theorem 10.8]{book}, we can consider an edge proper $\Delta(T')$-coloring of
$T'$,
with colors
$1, 2, \ldots, \Delta(T')$.
Also, color the leaves by distinct colors
$\Delta(T')+1, \Delta(T')+2, \ldots, \Delta(T')+m$.
Now, let
$v$
and
$u$
be two arbitrary vertices of
$T$.
Let
$P_{v-u}$
denote the path between
$v$
and
$u$
and
$P$
be a maximal path that contains
$P_{v-u}$.
There exists two leaves, 
$e$
and
$e'$
on
$P$.
The colors of
$e$
and
$e'$
distinguish vertices
$v$
and
$u$,
and the proof is completed.
\end{proof}

\section{Edge metric dimension and distinguishing chromatic index}
\label{EdgeMetric-dist chro}
The minimum size of subset $S$ of edges of graph $G$ that for any two edges
$e$ and $e'$, there exists $f\in S$ such that ${d}(e, f)\neq {d}(e', f)$,
is the {\em edge metric dimension} of $G$ and denoted by $\dim_{E}(G)$.
We say that the set $S$ is an {\em edge basis} of $G$.
Actually, the edge metric dimension of a graph $G$ is the standard metric dimension of the line graph $L(G)$. This concept is introduced and studied by Nasir et. al. \cite{nasir}.
Also, Kalinowski and Pil\'{s}niak introduced the distinguishing chromatic index in \cite{Kalinowski}, wherein the edge distinguishing coloring is an edge proper coloring such that the only color preserving automorphism is the trivial automorphism.
The {\em distinguishing chromatic index}
$\chi'_{D}(G)$ of a graph $G$
is the minimum number of colors that admit an edge distinguishing coloring. In this section, we study some relations between edge-locating coloring and those concepts.

For any subset $S$ of edges of $G$, let $G-S$
denote the subgraph of $G$ with vertex set $V(G)$
and edge set $E(G)\setminus S$.
Let $S$ be an edge basis of
$G$. Consider graph $H:=G-S$ and assign colors
$1, 2, \ldots, \chi'(H)$ to edges $H$ according to a proper edge coloring of
$H$. Also give distinct colors $\chi'(H)+1, \chi'(H)+2, \ldots, \chi'(H)+ |S|$
to elements of $S$. Clearly this coloring is an edge-locating coloring of
$G$. Since $\chi'(H)\leq \chi'(G)$, we have the following bound.
\begin{align}
\chi'(G)\leq \chi'_{L}(G)\leq \chi'(G)+ \dim_{E}(G).
\end{align}
Clearly, this bound is sharp. For instance, let
$G=C_{2n}$.

Let
$vu$
and
$wx$
be two edges in graph
$G$
and
$f \in {Aut}(G)$.
We say that
$f(vu)=wx$,
if
$f(v)=w$, 
and
$f(u)=x$.

\begin{theorem}
Any edge-locating coloring of a graph is an edge distinguishing coloring.
\end{theorem}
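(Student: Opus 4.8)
The plan is to show that any proper edge coloring $c$ which is edge-locating must be fixed only by the trivial automorphism, by arguing that a color-preserving automorphism must fix every vertex. First I would take an arbitrary automorphism $f \in \mathrm{Aut}(G)$ that preserves the coloring $c$, meaning $c(f(e)) = c(e)$ for every edge $e$. The key observation is that a color-preserving automorphism preserves distances (being an automorphism) and preserves color classes (by hypothesis), so for every vertex $v$ and every color class $\mathcal{C}_i$ we have $d(f(v), \mathcal{C}_i) = d(v, f^{-1}(\mathcal{C}_i)) = d(v, \mathcal{C}_i)$, where the last equality uses that $f$ permutes each $\mathcal{C}_i$ onto itself. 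Hence $c_\pi(f(v)) = c_\pi(v)$ for every vertex $v$.

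Next I would invoke the edge-locating property: since $c$ is an edge-locating coloring, distinct vertices have distinct edge color codes. Because $c_\pi(f(v)) = c_\pi(v)$, this forces $f(v) = v$ for every vertex $v$. Therefore $f$ is the identity on $V(G)$, and since a graph automorphism is determined by its action on vertices, $f$ is the trivial automorphism. Together with the fact that $c$ is by definition a proper edge coloring, this shows $c$ is an edge distinguishing coloring.

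The only genuinely careful point — and the one I would spell out explicitly — is the claim that a color-preserving automorphism permutes each color class $\mathcal{C}_i$ onto itself. This is immediate from the definition: if $e \in \mathcal{C}_i$ then $c(f(e)) = c(e) = i$, so $f(e) \in \mathcal{C}_i$; applying the same to $f^{-1}$ gives that $f$ restricted to $\mathcal{C}_i$ is a bijection onto $\mathcal{C}_i$. Everything else is routine: distances are preserved by any automorphism, so the minimum in $d(v,\mathcal{C}_i) = \min\{d(v,e) : e \in \mathcal{C}_i\}$ is carried to the corresponding minimum over $f(\mathcal{C}_i) = \mathcal{C}_i$. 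I do not anticipate any real obstacle here; the statement is essentially the edge analogue of the locating-coloring result of Korivand, Erfanian, and Baskoro cited earlier, and the proof is a short verification. As a closing remark one could note that this immediately yields the bound $\chi'_D(G) \le \chi'_L(G)$, paralleling the inequalities already displayed for $\dim_E(G)$ and $\chi'(G)$.
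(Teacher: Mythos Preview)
Your proposal is correct and follows essentially the same approach as the paper: both hinge on the observation that a color-preserving automorphism $f$ preserves each color class setwise and preserves distances, hence preserves every edge color code, which by the edge-locating property forces $f$ to fix every vertex. The paper phrases this by contradiction (assuming $f$ is nontrivial and exhibiting two vertices $a\neq b=f(a)$ with $c_\pi(a)=c_\pi(b)$), whereas you argue directly that $f(v)=v$ for all $v$; the content is the same.
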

\begin{proof}
Let
$G$
be a graph with size
$m$
and
$\pi=(\mathcal{C}_1, \mathcal{C}_2,\ldots, \mathcal{C}_n )$
be the color classes admitted by an edge-locating coloring
$c$
of
$G$.
The result is immediate if
$n=m$.
Assume that
$n < m$.
For a contradiction, suppose that
$c$
is not an edge distinguishing coloring of
$G$.
Thus, there exists an automorphism
$f$
of
$G$
that preserves the coloring, and
$f(e_a)=e_b$
for two edges
$e_a$
and
$e_b$
in
$\mathcal{C}_1$.
Let
$e_a=aa'$,
$e_b=bb'$,
$f(a)=b$
and
$f(a')=b'$.
Consider arbitrary color
$i$
($1\leq i \leq n$)
and let
${d}(a,\mathcal{C}_i)= {d}(a,e^{a}_i)$
and
${d}(b,\mathcal{C}_i)= {d}(b,e^{b}_i)$,
for edges
$e^{a}_i$
and
$e^{b}_i$
with color
$i$.
We will have
\begin{align}
{d}(a,e^{a}_i) = {d}(f(a),f(e^{a}_i)) = {d}(b,f(e^{a}_i))
\end{align}
and
\begin{align}
{d}(b,e^{b}_i) = {d}(f^{-1}(b),f^{-1}(e^{b}_i)) = {d}(a,f^{-1}(e^{b}_i)).
\end{align}
Since
${d}(a,\mathcal{C}_i)\leq {d}(a,f^{-1}(e^{b}_i))$
and
${d}(b,\mathcal{C}_i) \leq {d}(b,f(e^{a}_i))$,
(2) and (3) imply that
${d}(a,\mathcal{C}_i)={d}(b,\mathcal{C}_i)$.
This means that
$c_{\pi}(a)=c_{\pi}(b)$,
a contradiction.
\end{proof}
\begin{corollary}\label{co1}
For any graph
$G$,
\begin{itemize}
\item[(i)]
$\chi'_{D}(G) \leq \chi'_{L}(G)$.
\item[(ii)]
$\chi'_{D}(G)\leq \chi'(G)+ \dim_{E}(G)$.
\end{itemize}
\end{corollary}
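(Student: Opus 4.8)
Both inequalities are meant to fall out immediately from the theorem just established --- that every edge-locating coloring of $G$ is an edge distinguishing coloring --- together with the chain $\chi'(G)\le\chi'_L(G)\le\chi'(G)+\dim_{E}(G)$ proved earlier by the edge-basis construction. As in the rest of the paper I would work with $G$ connected and of order at least three, so that $\chi'_L(G)$ and $\dim_{E}(G)$ are defined and, by the theorem applied to the coloring that gives every edge its own color, an edge distinguishing coloring of $G$ exists and $\chi'_{D}(G)$ is finite.

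For part (i), the plan is to take an edge-locating coloring of $G$ realizing the optimum, i.e.\ one that uses exactly $\chi'_L(G)$ colors. By the preceding theorem this same coloring is an edge distinguishing coloring of $G$, using $\chi'_L(G)$ colors. Since $\chi'_{D}(G)$ is by definition the smallest number of colors appearing in any edge distinguishing coloring of $G$, this forces $\chi'_{D}(G)\le\chi'_L(G)$.

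For part (ii), I would simply compose the inequality from (i) with the earlier bound $\chi'_L(G)\le\chi'(G)+\dim_{E}(G)$, obtaining $\chi'_{D}(G)\le\chi'_L(G)\le\chi'(G)+\dim_{E}(G)$. For completeness one can recall why the middle bound holds: picking an edge basis $S$ of $G$ with $|S|=\dim_{E}(G)$, properly edge-coloring $G-S$ (which needs $\chi'(G-S)\le\chi'(G)$ colors) and assigning $|S|$ new colors to the edges of $S$ produces an edge-locating coloring of $G$ with at most $\chi'(G)+\dim_{E}(G)$ colors.

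I do not expect any genuine obstacle: the statement is a corollary in the literal sense, and the only point deserving a sentence is the well-definedness of $\chi'_{D}(G)$, which is secured exactly by the theorem it follows from. Thus the whole argument is the two-line deduction above, with the standing connectivity hypothesis the only hidden assumption.
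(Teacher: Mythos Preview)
Your proposal is correct and matches the paper's intent exactly: the corollary is stated without proof, as it follows immediately from the preceding theorem (every edge-locating coloring is edge distinguishing) together with the chain $\chi'(G)\le\chi'_L(G)\le\chi'(G)+\dim_{E}(G)$ established just before. Your two-line deduction is precisely the argument the reader is expected to supply.
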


By Theorem 16 \cite{Kalinowski}, the equality of Corollary \ref{co1} (ii) is achieved if and only if
$G$
is a path graph,
$C_4$
or
$C_6$.
Also, Theorem 16 \cite{Kalinowski} concludes that
$\chi'_{D}(G) = \chi'_{L}(G) = k$
for
$k \in \{\Delta(G), \Delta(G)+1 \}$.

\section{Future Research}
As you have seen in different sections, the edge-locating chromatic number is related to different and well-known graph concepts. One of them is the edge chromatic index. Recall that
$\chi'(G)\leq \chi'_{L}(G)$,
for a connected graph
$G$.
Classifying connected graphs
$G$
such that
$\chi'(G) =\chi'_{L}(G)$
can be valuable.
Also, one can check if the edge chromatic index is independent of the edge-locating chromatic number. For this purpose, looking for a graph where the edge chromatic index is
$m$
and the edge-locating chromatic number is
$n$,
for any integers $m$ and $n$ that
$m \leq n$.
We think such a graph is available.
For
$k\geq 2$,
let
$T_k$
be the perfect binary tree with root
$a$,
such that
$deg(a)=2$,
other non-pendant vertices have degree $3$, and all pendant vertices have distance
$k$
from
$a$.
By K\"{o}nig’s Theorem \cite[Theorem 10.8]{book}, the chromatic index of
$T_k$
is
$3$,
for any
$k\geq 2$.
But as
$k$
increases, the edge-locating chromatic number of
$T_k$
also increases. If we find the edge-locating chromatic number of
$T_k$ 
and let
$G$
be the graph obtained by joining the rote of
$T_k$
to a star graph, the question is answered. We end the paper with the following problems.
\begin{problem}
Prove or disprove that for any connected graph
$G$
of order
$n$,
$\chi_L'(G) \leq \chi_L'(K_n)$.
\end{problem}
\begin{problem}
Characterize the class  $\Psi$  of connected graphs such that
$G \in \Psi$
if and only if
$\chi'_{D}(G) = \chi'_{L}(G) = k$
for
$k \in \{\Delta(G), \Delta(G)+1 \}$.
\end{problem}
\begin{problem}
For a connected graph
$G$,
is there a significant relationship between
$\chi_{L}(G)$
and
$\chi'_{L}(G)$?
\end{problem}

\section*{Acknowledgment}
The research has been supported by the 2023 PPMI research grant, Faculty of Mathematics and Natural Sciences, Institut Teknologi Bandung, Indonesia.

\end{document}